\newtheorem{theorem}{Theorem}[section]
\newtheorem{lemma}[theorem]{Lemma}
\newtheorem{corollary}[theorem]{Corollary}
\newtheorem{proposition}[theorem]{Proposition}
\theoremstyle{definition}
\newtheorem{definition}[theorem]{Definition}
\newtheorem{remark}[theorem]{Remark}
\newtheorem{question}[theorem]{Question}
\numberwithin{equation}{section}
\def\RR{\mathbb{R}} 
\def\CC{\mathbb{C}} 
\def\NN{\mathbb{N}} 
\def\im{\sqrt{-1}} 
\def\ddbar{\partial\bar\partial} %
\newcommand{\paren}[1]{\left(#1\right)}
\newcommand{\pd}[2]{\frac{\partial#1}{\partial#2}}
\newcommand{\abs}[1]{\left\vert#1\right\vert}
\newcommand{\norm}[1]{\left\|#1\right\|}
\newcommand{\inner}[1]{\left\langle{#1}\right\rangle}
\begin{document}

\title[Subharmonicity of K\"ahler-Einstein metrics]{Subharmonicity of variations of K\"ahler-Einstein metrics on pseudoconvex domains}

\author{Young-Jun Choi}

\address{School of Mathematics, Korea Institute for Advanced Study(KIAS), 85 Hoegiro Dongdaemun-gu, Seoul 130-722 790-784, Republic of Korea}

\email{choiyj@kias.re.kr}

\subjclass[2010]{32Q20, 32F05, 32T15}

\keywords{K\"{a}hler-Einstein metric, strongly pseudoconvex domain, a family of strongly pseudoconvex domains, subharmonic, plurisubharmonic, variation}

\maketitle

\begin{abstract}
This paper is a sequel to \cite{Choi} in Math. Ann. In that paper we studied the subharmonicity of K\"ahler-Einstein metrics on strongly pseudoconvex domains of dimension greater than or equal to $3$. In this paper, we study the variations K\"ahler-Einstein metrics on bounded strongly pseudoconvex domains of dimension $2$. In addition, we discuss the previous result with general bounded pseudoconvex domain and local triviality of a family of bounded strongly pseudoconvex domains.
\end{abstract}

\section{Introduction}

Let $(z,s)\in\CC^n\times\CC$ be the standard coordinates and $\pi:\CC^n\times\CC\rightarrow\CC$ be the projection on the second factor. Let $D$ be a smooth domain in $\CC^{n+1}$ such that for each $s\in\pi(D)$, the slice $D_s=D\cap\pi^{-1}(s)=\{z:(z,s)\in{D}\}$ is a bounded strongly pseudoconvex domian with smooth boundary.

In \cite{Cheng_Yau}, Cheng and Yau constructed a unique complete K\"ahler-Einstein metric on a bounded strongly pseudoconvex domain with smooth boundary. This implies that there exists a unique complete K\"abler-Einstein metric $h_{\alpha\bar\beta}(z,s):=h^s_{\alpha\bar\beta}(z)$ on each slice $D_s$ which satisfies the following:
\begin{align*}
-(n+1)h_{\alpha\bar\beta}(z,s)
	& =\mathrm{Ric}_{\alpha\bar\beta}(z,s) \;\;\;\;\;\;\text{(the Ricci tensor)} \\
	& =-\pd{^2}{z^\alpha\partial z^{\bar\beta}}
	\log\det\paren{h_{\gamma\bar\delta}(z,s)}_{1\le\gamma,\delta\le{n}}.
\end{align*}
Namely, the Ricci curvature is a negative constant $-(n+1)$. This constant could be any negative number; $-(n+1)$ is chosen for convenience. On each slice $D_s$, 
\begin{equation*}
h(z,s)
:=
\frac{1}{n+1}\log\det\paren{h_{\gamma\bar\delta}(z,s)}_{1\le\gamma,\delta\le{n}}
\end{equation*}
is a potential function of the K\"{a}hler-Einstein metric $h_{\alpha\bar\beta}(\cdot,s)$. We can consider $h$ as a smooth function on $D$ (\cite{Choi}). It is an immediate consequence of the K\"{a}hler-Einstein conditions that the restriction of $h$ to each slice $D_s$ is strictly plurisubharmonic. But it is not obvious that it is also plurisubharmonic or strictly plurisubharmonic in the base direction (the $s$-direction). In \cite{Choi}, we have shown that if the slice dimension $n$ is greater than or equal to $3$, then $h$ is plurisubharmonic provided $D$ is pseudoconvex. Moreover, we have also proved that $h$ is strictly plurisubharmonic if $D$ is strongly pseudoconvex.
\medskip

In this paper, we shall deal with a family of bounded smooth strongly pseudoconvex domain of dimension greater than or equal to $2$. It is remarkable to note that Maitani and Yamaguchi already proved the $1$-dimensional slice case (\cite{Maitani_Yamaguchi}).

\begin{theorem}\label{T:main_theorem}
With the above notations, if $D$ is a strongly pseudoconvex domain in $\CC^{n+1}$, then $h(z,s)$ is a strictly plurisubharmonic function.
\end{theorem}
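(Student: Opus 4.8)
The plan is to compute the complex Hessian of $h$ in all directions and show it is positive definite. The slice (fiber) directions are already handled: the Kähler-Einstein condition forces $h$ restricted to each $D_s$ to be strictly plurisubharmonic, so the only real content is controlling the mixed $(\alpha\bar{s})$ and pure base $(s\bar{s})$ components of $\partial\bar\partial h$. Following the strategy that must underlie \cite{Choi}, I would differentiate the Monge-Ampère / Kähler-Einstein equation $-(n+1)h_{\alpha\bar\beta}=-\partial_{z^\alpha}\partial_{\bar{z}^\beta}\log\det(h_{\gamma\bar\delta})$ with respect to the base parameter $s$. Since $h$ is a genuine potential ($h_{\alpha\bar\beta}=\partial_\alpha\partial_{\bar\beta}h$), differentiating the defining equation in $s$ and $\bar{s}$ produces a linear elliptic equation on each slice whose leading operator is the Laplace-Beltrami operator $\Delta_s$ of the Kähler-Einstein metric $h_{\alpha\bar\beta}(\cdot,s)$, with the curvature normalization entering as a zeroth-order term.

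Concretely, the second base derivative $c(z,s):=\partial_s\partial_{\bar s}h$ is the quantity whose positivity I must establish, and the mixed derivatives $h_{\alpha\bar s}$ enter as an obstruction that I want to absorb. The natural move is to form the \emph{geodesic curvature}-type quantity, i.e. the full base Hessian of $h$ corrected by the horizontal lift: set $c_{s\bar s}:=h_{s\bar s}-h^{\alpha\bar\beta}h_{s\bar\beta}h_{\alpha\bar s}$, where $h^{\alpha\bar\beta}$ is the inverse Kähler-Einstein metric. This is exactly the Schur-complement combination that computes the restriction of $\partial\bar\partial h$ to the horizontal (metric-orthogonal) complement of the fibers, so positivity of $h$ as a function on $D$ is equivalent to positivity of $c_{s\bar s}$ together with the already-known fiber positivity. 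Differentiating the Kähler-Einstein equation twice and commuting derivatives (the Bochner-Kodaira bookkeeping) should yield an equation of the schematic form
\begin{equation*}
\paren{\Delta_s - (n+1)}c_{s\bar s} = \abs{\bar\partial V}^2 + (\text{pseudoconvexity term}),
\end{equation*}
where $V$ is the horizontal lift vector field $h^{\alpha\bar\beta}h_{s\bar\beta}\partial_\alpha$ and the right-hand side is manifestly nonnegative, with the nonnegative contribution coming from the Levi form of $\bdry{D}$ via strong pseudoconvexity.

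The analytic heart of the argument is then a maximum-principle / eigenvalue estimate: the operator $\Delta_s-(n+1)$ on the complete Kähler-Einstein manifold $D_s$ is invertible with the correct sign because $-(n+1)$ lies below the bottom of the spectrum of $-\Delta_s$, so a nonnegative source term forces $c_{s\bar s}\ge 0$, and \emph{strict} positivity follows because the source is strictly positive somewhere (and the solution cannot vanish identically). I would set this up carefully using the completeness of the metric and appropriate decay/boundary behavior of $c_{s\bar s}$ near $\bdry{D}$, since the maximum principle on a complete noncompact manifold requires either a barrier or a Yau-type $L^2$/stochastic-completeness argument rather than a naive compactness argument.

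The step I expect to be the main obstacle is precisely this passage from the differentiated equation to the sign of $c_{s\bar s}$ in the borderline dimension $n=2$. In \cite{Choi} the hypothesis $n\ge 3$ was used to make a curvature or integration-by-parts term have a favorable sign, and at $n=2$ that margin disappears; so the real work is to find the sharper identity or the extra nonnegative term (most plausibly an honest use of the full strong-pseudoconvexity of the total space $D$, not merely of each slice) that closes the estimate when $n=2$. Controlling the behavior of the correction tensor $c_{s\bar s}$ and the lift $V$ up to the boundary $\bdry{D_s}$ — where the Kähler-Einstein metric blows up like the inverse square of the boundary distance — is the technical crux that makes the maximum principle legitimate, and I would expect the bulk of the paper's effort to go into these boundary asymptotics and the $n=2$ curvature identity.
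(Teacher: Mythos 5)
Your framework coincides with the paper's: reduce everything to positivity of the geodesic curvature $c(H)=h_{s\bar s}-h_{s\bar\beta}h^{\bar\beta\alpha}h_{\alpha\bar s}$, invoke Schumacher's slicewise elliptic equation, and finish with a maximum principle on the complete fiber. But the two places where you wave your hands are precisely where the content of the paper lies, and your diagnosis of the $n=2$ difficulty is wrong. The equation is exactly $-\Delta c(H)+(n+1)c(H)=\abs{\bar\partial v_H}^2$, valid in every dimension: there is no extra ``pseudoconvexity term'' on the right-hand side, and no curvature identity or integration-by-parts margin that degenerates at $n=2$. (Note also that your schematic operator $\Delta_s-(n+1)$ has the wrong sign for your conclusion: a nonnegative source for that operator forces $c\le 0$, not $c\ge 0$.) Strong pseudoconvexity of the total space $D$ never enters the PDE; it enters only through boundary behavior, and that is the crux you leave unaddressed. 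To run Yau's almost maximum principle one must show $c(H)$ is bounded below on the noncompact slice, and the paper does this by introducing the reference form $W=\im\ddbar w$ built from Fefferman's approximate solution, for which $c(W)\to\infty$ at $\partial D_s$ by strong pseudoconvexity of $D$, and then proving the ratio estimate $c(H)/c(W)\to 1$ (Proposition \ref{P:ratio_boundary_behavior}). The dimension enters only through the Cheng--Yau decay rate $\abs{D^pu}=O(\abs{\varphi}^{n+1/2-p-b})$ for the Monge--Amp\`ere solution: for $n\ge3$ this makes $\abs{c(H)-c(W)}$ bounded (the case already settled in \cite{Choi}), but for $n=2$ it does not, and the new work of the paper is a decay estimate $u_s=O(\abs{\varphi}^r)$ for fractional $r$ (Proposition \ref{P:second}) combined with a Schauder estimate in Cheng--Yau coordinates to control the error terms $R_1$, $R_2$ relative to $c(W)$. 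None of this comparison machinery appears in your proposal, and without it the maximum principle has no starting point.

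Your strictness argument also fails as stated. You claim strict positivity ``because the source is strictly positive somewhere,'' but a priori $\abs{\bar\partial v_H}^2$ may vanish identically; that is exactly the locally trivial situation of Theorem \ref{T:triviality}, and it is perfectly consistent with the equation together with $c(H)\equiv0$, so pointwise positivity of the source is not something you can assert. The paper's argument is different: $c(H)$ is real analytic (DeTurck--Kazdan \cite{DeTurck_Kazdan}), a Hopf-type strong maximum principle (Proposition \ref{P:uniqueness}) shows that a nonnegative solution of the equation vanishing at a point vanishes identically near it, hence by analyticity $c(H)$ is either identically zero or nowhere zero; identical vanishing is then excluded because $c(H)\to\infty$ at the strongly pseudoconvex boundary. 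So strictness, like nonnegativity, is ultimately a consequence of the boundary blow-up of $c(W)$, not of any interior positivity of the right-hand side.
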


In case of a general bounded pseudoconvex domain, Cheng and Yau also constructed a unique K\"{a}hler-Einstein metric which is almost complete, which is a limit of K\"ahler-Einstein metrics on relatively compact subdomains (\cite{Cheng_Yau}). In \cite{Mok_Yau}, Mok and Yau proved that this metric is, in fact, complete. Hence we can consider the situation that $D$ is a pseudoconvex domain such that each slice $D_s$ is a bounded pseudoconvex domain. By simple approximation process, we have the following corollary.

\begin{corollary}\label{C:subharmonicity}
Under the above hypothesis, $h$ is a plurisubharmonic function.
\end{corollary}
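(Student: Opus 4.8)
The plan is to reduce the statement to Theorem \ref{T:main_theorem} by exhausting $D$ from inside by strongly pseudoconvex domains and passing to the limit. Since $D$ is pseudoconvex, it carries a smooth strictly plurisubharmonic exhaustion function $\varphi\colon D\to\RR$. By Sard's theorem almost every value is regular, so for an increasing sequence $c_1<c_2<\cdots\to\sup_D\varphi$ of regular values the sublevel sets
\[
D^{(j)}:=\{(z,s)\in D:\varphi(z,s)<c_j\}
\]
are smoothly bounded and, because $\varphi$ is strictly plurisubharmonic, strongly pseudoconvex in $\CC^{n+1}$; moreover $D^{(j)}\Subset D^{(j+1)}$ and $\bigcup_j D^{(j)}=D$. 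For each fixed $s$ the slice $D^{(j)}_s=\{z:\varphi(z,s)<c_j\}$ is cut out by the restriction of $\varphi$, which is again strictly plurisubharmonic, so $D^{(j)}_s$ is a bounded strongly pseudoconvex domain with smooth boundary exhausting $D_s$. Thus each $D^{(j)}$ satisfies the hypotheses of Theorem \ref{T:main_theorem}, and the associated potential $h^{(j)}$ of the slicewise K\"ahler--Einstein metric is strictly plurisubharmonic on $D^{(j)}$.

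Second, I would identify the limit of the $h^{(j)}$. On a fixed slice the slicewise metric of $D^{(j)}$ is exactly the complete K\"ahler--Einstein metric of the bounded strongly pseudoconvex domain $D^{(j)}_s$, while the metric of $D$ on that slice is the complete K\"ahler--Einstein metric of $D_s$ furnished by \cite{Cheng_Yau} and \cite{Mok_Yau}. The Cheng--Yau construction produces the latter precisely as the limit of the former along an exhaustion, via the comparison principle for the underlying complex Monge--Amp\`ere equation. That comparison principle yields monotonicity: as the domain increases the complete K\"ahler--Einstein metric decreases (as in the one-variable model, where the Poincar\'e metric of a larger disk is the smaller), so
\[
h^{(1)}_{\alpha\bar\beta}(\cdot,s)\ge h^{(2)}_{\alpha\bar\beta}(\cdot,s)\ge\cdots\longrightarrow h_{\alpha\bar\beta}(\cdot,s)
\]
on each slice. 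Since the determinant is monotone on the cone of positive-definite Hermitian matrices, $\det\paren{h^{(j)}_{\gamma\bar\delta}}$ decreases to $\det\paren{h_{\gamma\bar\delta}}$, and hence $h^{(j)}=\tfrac{1}{n+1}\log\det\paren{h^{(j)}_{\gamma\bar\delta}}$ decreases pointwise to $h$ on $D$.

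Finally, a decreasing sequence of plurisubharmonic functions has a plurisubharmonic limit. Each $h^{(j)}$ is strictly plurisubharmonic on $D^{(j)}$ by the first step, and on any compact subset of $D$ the functions $h^{(j)}$ are eventually all defined and decrease to the continuous function $h$; therefore $h$ is plurisubharmonic on $D$, which is the assertion. The only delicate point is the convergence input of the second step: one must know both that the complete K\"ahler--Einstein metric on $D_s$ is the monotone limit of those on the exhausting strongly pseudoconvex slices and that this limit is genuinely the metric used to define $h$ in the statement. Both are guaranteed by the existence and comparison results of Cheng--Yau together with the completeness theorem of Mok--Yau, so no new estimate is required beyond invoking these; the passage to the limit is then immediate from the stability of plurisubharmonicity under decreasing limits, which is why no $C^2$-control of $h^{(j)}$ in the base direction is needed.
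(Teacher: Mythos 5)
Your proposal follows essentially the same route as the paper's own proof: exhaust $D$ by smoothly bounded strongly pseudoconvex sublevel sets $D^N=\{\psi<N\}$ of a strictly plurisubharmonic exhaustion function, apply Theorem \ref{T:main_theorem} on each piece to get strictly plurisubharmonic potentials $h^N$, show these decrease to $h$, and conclude by the stability of plurisubharmonicity under decreasing limits. The structure, the role of Sard's theorem, and the appeal to Cheng--Yau and Mok--Yau to identify the limit all match Section 4 of the paper.

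The one place where you deviate, and where your justification overreaches, is the monotonicity step. You claim that the complete K\"ahler--Einstein \emph{metric tensors} themselves decrease as the domain increases, and then take determinants. In dimension one this is the classical Schwarz lemma for the Poincar\'e metric, but for $n\ge 2$ a Schwarz lemma at the level of metrics (Yau, \cite{Yau}) requires the holomorphic bisectional curvature of the target to be bounded above by a negative constant; the K\"ahler--Einstein metric of the larger domain has no such bound in general, since only its Ricci curvature is prescribed. So the displayed inequality $h^{(1)}_{\alpha\bar\beta}\ge h^{(2)}_{\alpha\bar\beta}\ge\cdots$ is not justified (and is not what the cited results give). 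What \cite{Mok_Yau} provides, and what the paper invokes, is the Schwarz lemma for \emph{volume forms}: $\det\paren{h^{N}_{\alpha\bar\beta}}\ge\det\paren{h^{N'}_{\alpha\bar\beta}}$ for $N<N'$, which is exactly the monotonicity of the potentials $h^N=\frac{1}{n+1}\log\det\paren{h^N_{\gamma\bar\delta}}$. Alternatively, one can obtain the same inequality by the comparison principle applied directly to the potentials: both solve $\det(u_{\alpha\bar\beta})=e^{(n+1)u}$, the potential of the smaller domain tends to $+\infty$ at its boundary while that of the larger domain stays bounded there, so at an interior minimum of their difference the complex Hessians compare, forcing the difference to be nonnegative. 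Since the determinant (equivalently, potential) inequality is all your argument actually uses downstream, the repair is local: state the monotonicity for volume forms rather than for metrics, and the rest of your proof coincides with the paper's.
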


In \cite{Tsuji2}, Tsuji showed a dynamical construction of a K\"{a}hler-Eistein metric on a bounded strongly pseudoconvex domain with smooth boundary. More precisely, he have shown that the K\"ahler-Einstein metric is the iterating limit of the Bergman metric. Using the Berndtsson's result (\cite{Berndtsson1}), he proved the same result with Corollary~\ref{C:subharmonicity}.
\medskip

The above setting is also considered as a family of bounded strongly pseudoconvex domains. Moreover, the geodesic curvature (which is defined in Section~\ref{S:Prelimiaries}) is strongly related with the Kodaira-Spencer map. So it is natural to ask what happens if the geodesic curvature vanishes. The following theorem answers this question.

\begin{theorem} \label{T:triviality}
Suppose that the slice dimension $n$ is greater than or equal to $3$. If the geodesic curvature vanishes, then the family is locally trivial.
\end{theorem}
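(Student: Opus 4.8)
The plan is to read the vanishing of the Kodaira--Spencer class off the geodesic--curvature identity, and then to integrate this infinitesimal triviality to a genuine biholomorphic trivialization by means of the Monge--Amp\`ere foliation attached to $h$. First I would recall the relevant objects. Writing $h^{\alpha\bar\beta}$ for the inverse of the fiberwise K\"ahler--Einstein metric $h_{\alpha\bar\beta}$, the horizontal lift of $\pd{}{s}$ is the $(1,0)$--vector field
\[
V=\pd{}{s}+a^\alpha\pd{}{z^\alpha},\qquad a^\alpha=-h^{\alpha\bar\beta}h_{s\bar\beta},
\]
characterized by $\ddbar h\paren{V,\ov{\pd{}{z^\alpha}}}=0$; the geodesic curvature is its horizontal norm $c=\ddbar h(V,\ov V)=h_{s\bar s}-h^{\alpha\bar\beta}h_{s\bar\beta}h_{\alpha\bar s}$, and its Kodaira--Spencer representative is the $(0,1)$--form $A=A^\alpha_{\bar\beta}\,d\ba z^\beta\otimes\pd{}{z^\alpha}$ with $A^\alpha_{\bar\beta}=\pd{a^\alpha}{\ba z^\beta}$. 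The variational computation behind \cite{Choi} (valid for $n\ge3$) yields, on each slice $D_s$, a pointwise identity of the shape
\[
(\Box+1)\,c=\abs{A}_\omega^2,
\]
where $\Box\ge0$ is the complex Laplacian of $h_{\alpha\bar\beta}$ and $\abs{A}_\omega$ is the fiberwise norm of $A$; the only features I will use are that the zeroth--order coefficient on the left is positive and the right--hand side is a nonnegative multiple of $\abs{A}^2$.

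Given this, the first step is immediate: if $c\equiv0$, the left side vanishes, hence $\abs{A}_\omega\equiv0$, so $A\equiv0$ on every slice. Equivalently $\pd{a^\alpha}{\ba z^\beta}\equiv0$, so the coefficients $a^\alpha$ are holomorphic along each fiber and the Kodaira--Spencer map of the family vanishes for every $s$.

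It then remains to integrate this infinitesimal triviality. Since $h$ is plurisubharmonic (Corollary~\ref{C:subharmonicity}, which is where the hypothesis $n\ge3$ is used) and $c\equiv0$, the Levi form $\ddbar h$ is positive semidefinite of constant rank $n$, its one--dimensional kernel at each point being spanned by $V$. By the theory of the complex Monge--Amp\`ere foliation (Bedford--Kalka), $\ker\ddbar h$ is then integrable and defines a holomorphic foliation $\mathcal F$ of $D$ by Riemann surfaces; the fiberwise holomorphicity $A\equiv0$ is precisely the compatibility making $\mathcal F$ holomorphic. Because $d\pi(V)=\pd{}{s}\neq0$, the foliation is transverse to the fibration $\pi$ and each leaf projects locally biholomorphically onto the base. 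Following leaves then identifies neighboring slices: for $p\in D_{s_0}$ the leaf through $p$ meets $D_s$ in a single point $\Phi_s(p)$ depending holomorphically on $(p,s)$, and $\Phi_s\colon D_{s_0}\to D_s$ is a biholomorphism. Assembling the $\Phi_s$ gives a fiber--preserving biholomorphism $D\cong D_{s_0}\times\Delta'$ over a small disk $\Delta'\ni s_0$, which is local triviality.

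The hard part is the last step in the non--compact, boundary setting. A priori the leaf through $p$ might run out to $\bdry D$ before reaching a neighboring slice, so the delicate point is to show that the holonomy of $\mathcal F$ is defined across the \emph{entire} slice $D_{s_0}$ and lands on the \emph{entire} neighboring slice, uniformly as $p\to\bdry D_{s_0}$. Here I would invoke the completeness of the K\"ahler--Einstein metric together with the boundary asymptotics of $h$ and of the horizontal field $V$ established in \cite{Choi} (again available only for $n\ge3$) to control the leaves up to the boundary and to guarantee that the trivialization extends over the whole fiber rather than over a proper subdomain.
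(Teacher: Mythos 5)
Your opening move coincides with the paper's: from Schumacher's identity \eqref{E:PDE}, $c(H)\equiv 0$ forces $\abs{\bar\partial v_H}\equiv 0$, so the horizontal lift becomes holomorphic and the Kodaira--Spencer classes vanish; moreover your Monge--Amp\`ere foliation is nothing but the orbit foliation of this holomorphic field $v_H$, so ``following the leaves'' and ``integrating the flow'' are the same operation. The problem is that the step you explicitly defer --- ruling out that a leaf through a point of $D_{s_0}$ runs into $\partial D$ before it has crossed to the neighboring slices --- is the entire analytic content of the theorem, and your proposed substitute (``invoke the completeness of the K\"ahler--Einstein metric together with the boundary asymptotics of $h$ and of $V$'') is not an argument: completeness of the fiberwise metrics constrains curves inside a fiber, not the horizontal leaves, and no mechanism is indicated by which ``boundary asymptotics'' confine a leaf to $D$. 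The paper's mechanism is quantitative. Proposition \ref{P:horizontal_lift'} shows, using the boundary behavior of the Monge--Amp\`ere solution (Lemma \ref{L:identity1}, Lemma \ref{L:identity2}, and the Schauder estimate \eqref{E:Schauder} leading to \eqref{E:bdry}), that for $n\ge 3$ the lift is tangent to $\partial D$ to order exactly one: $\abs{v_H(\varphi)}\le C\abs{\varphi}$. Proposition \ref{P:bijective} then runs a Gronwall argument along a flow line $\alpha$: setting $f(t)=\varphi(\alpha(t))$, the inequality $\abs{f'}\le c\abs{f}$ integrates to $\abs{f(\tau)}\ge\abs{\varphi(p)}e^{-c\abs{\tau}}$, so $\varphi$ stays negative and bounded away from $0$ on finite time intervals, the flow never reaches $\partial D$, and hence it is complete over the base disc and yields the biholomorphism $D_0\times U\to D$. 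Finally, the paper verifies that the constant $C=C_s$ coming from the slicewise Schauder estimate can be chosen uniformly in $s$ (via the uniformity of the Cheng--Yau coordinate systems); your sketch never touches this point, yet it is needed even for local triviality, since a single flow line passes through a continuum of slices.

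You also locate the hypothesis $n\ge 3$ in the wrong place. Plurisubharmonicity of $h$ is not where it is used --- the present paper proves Theorem \ref{T:main_theorem} and Corollary \ref{C:subharmonicity} for $n=2$ as well. The dimension enters only through the vanishing order of $u_s$ at the boundary, \eqref{E:bdry}: for $n\ge 3$ one gets $u_s=O(\abs{\varphi})$ and hence $\abs{v_H(\varphi)}=O(\abs{\varphi})$, whereas for $n=2$ one only gets the exponent $1/2-b'<1$, see \eqref{E:tangentiality}. This difference is fatal for the method: a differential inequality $\abs{f'}\le C\abs{f}^r$ with $r<1$ does not prevent $f$ from reaching $0$ in finite time (compare $f'=-Cf^{r}$), so the Gronwall step collapses --- exactly the remark the paper makes after Theorem \ref{T:triviality}. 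A correct write-up must therefore produce the order-one tangency estimate, its uniformity in $s$, and the ODE argument; without these, your proposal is a plan rather than a proof.
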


The proof of Theorem \ref{T:triviality} depends the vanishing order of the solution of complex Monge-Amp\`ere equation near the boundary. This is why our method is not applicable to the case that the slice dimension $n=2$.
\medskip

We will all the time consider only the case of a one dimensional base, but the computations are easily generalized  to the case of a higher dimensional base. Throughout this paper we use  small Greek letters, $\alpha,\beta,\dots=1,\dots,n$ for indices on $z\in\CC^n$ unless otherwise specified. For a properly differentiable function $f$ on $\CC^n\times\CC$, we denote by
\begin{equation*} 
f_\alpha
=
\pd{f}{z^\alpha}
\;\;\;
\textrm{and}
\;\;\;
f_{\bar\beta}
=
\pd{f}{z^{\bar\beta}}.
\end{equation*}
where $z^{\bar\beta}$ mean $\overline{z^\beta}$. If there is no confusion, we always use the Einstein convention. For a complex manifold $X$, we denote by $T'X$ the complex tangent vector bundle of $X$ of type $(1,0)$.

\section{Prelimiaries}\label{S:Prelimiaries}
In this section, we recaptulate the result in \cite{Choi}. Throughout this section, $D$ is a smooth domain in $\CC^{n+1}$ such that every slice 
$$
D_s=D\cap\pi^{-1}(s)=\{z:(z,s)\in{D}\}
$$ 
is a bounded strongly pseudoconvex domain with smooth boundary. Since our computation is always local in $s$-variable, we may assume that $\pi(D)=U$ the standard unit disc in $\CC$.

\subsection{Horizontal lifts and Geodesic curvatures}

\begin{definition}
Let $\tau$ be a real $(1,1)$-form on $D$ which is positive definite on each slice $D_s$. We denote by $v:=\partial/\partial s$ the holomorphic coordinate vector field.
\begin{itemize}
\item[1.] A vector field $v_\tau$ of type $(1,0)$ is  called a \emph{horizontal lift} along $D_s$ of $v$ if $v_\tau$ satisfies the following:
\begin{itemize}
\item [(\romannumeral1)]$\inner{v_\tau,w}_\tau=0$ for all $w\in{T'D_s}$,
\item [(\romannumeral2)]$d\pi(v_\tau)=v$.
\end{itemize}
\item[2.] The \emph{geodesic curvature} $c(\tau)$ of $\tau$ is defined by the norm of $v_\tau$ with respect to the sesquilinear form $\inner{\cdot,\cdot}_\tau$ induced by $\tau$, namely,
\begin{equation*}
c(\tau)=\inner{v_\tau,v_\tau}_\tau.
\end{equation*}
\end{itemize}
\end{definition}
Note that under the holomorphic coordinate $(z,s)$, $\tau$ is written by
\begin{equation*}
\tau
=
\im\paren{\tau_{s\bar s}ds\wedge{d\bar s} 
+\tau_{s\bar\beta}ds\wedge{dz}^{\bar\beta} 
+\tau_{\alpha\bar s}dz^\alpha\wedge{d\bar s} 
+\tau_{\alpha\bar\beta}dz^\alpha\wedge{dz}^{\bar\beta}
}.
\end{equation*}
Then the horizontal lift $v_\tau$ and the geodesic curvature $c(\tau)$ can be written by the following:
\begin{equation*}
v_\tau
=
\pd{}{s}-\tau_{s\bar\beta}\tau^{\bar\beta\alpha}\pd{}{z^\alpha}
\;\;\;
\textrm{and}
\;\;\;
c(\tau)
=
\tau_{s\bar{s}}-\tau_{s\bar\beta}\tau^{\bar\beta\alpha}\tau_{\alpha\bar{s}}.
\end{equation*}
Then it is well known that 
\begin{equation}\label{E:slice_positive}
\frac{\tau^{n+1}}{(n+1)!}=c(\tau)\cdot\frac{\tau^n}{n!}\wedge\im{d}s\wedge{d}\bar{s}.
\end{equation}
It is remarkable to note that since $\tau$ is positive definite when restricted to $D_s$, \eqref{E:slice_positive} implies that if $c(\tau)>0 \; (\ge0)$, then $\tau$ is a positive (semi-positive) real $(1,1)$-form.

\subsection{The geodesic curvatures of the real $(1,1)$-forms induced by defining functions} \label{SS:GC}
Since every slice $D_s$ is a bounded smooth strongly pseudoconvex domain, we can take a defining function of $D$ which satisfies the following conditions:
\begin{itemize}
\item [(\romannumeral1)] $\varphi\in{C}^\infty(\bar{D})$ and $D=\{(z,s)\in\CC^{n+1}:\varphi(z,s)<0\}$,
\item [(\romannumeral2)] $\partial\varphi\neq0$ on $\partial{D}$,
\item [(\romannumeral3)] $\paren{\varphi_{\alpha\bar\beta}(\cdot,s)}>0$ in $\bar D$ and
\item [(\romannumeral4)] $\partial_z\varphi\neq0$ on $\partial D$.
\end{itemize}
We denote by $g=-\log(-\varphi)$. Then it follows that 
\begin{equation} 
g_{\alpha\bar\beta}
=\frac{\varphi_{\alpha\bar\beta}}{-\varphi}+
\frac{\varphi_\alpha\varphi_{\bar\beta}}{\varphi^2}\;
\end{equation}
and the inverse is
\begin{equation} \label{E:inverse}
g^{\bar\beta\alpha}
	=(-\varphi)\paren{\varphi^{\bar\beta\alpha}
	+\frac{\varphi^{\bar\beta}\varphi^\alpha}{\varphi-\abs{d\varphi}^2}},
\end{equation}
where $\abs{d\varphi}^2=\varphi_\alpha\varphi_{\bar\beta}g^{\alpha\bar\beta}$. By some computation, we have $g^{\alpha\bar\beta}g_\alpha g_{\bar\beta}\le1$. It follows that $g_{\alpha\bar\beta}$ gives a complete K\"ahler metric on each $D_s$ (\cite{Cheng_Yau}). Now we define the real $(1,1)$-form $G$ by $G=\im\ddbar g$. 
A direct computation gives the following:
\begin{equation*}
g_{s\bar\beta}g^{\bar\beta\alpha}
	=\varphi_{s\bar\beta}\paren{\varphi^{\bar\beta\alpha}
	+\frac{\varphi^{\bar\beta}\varphi^\alpha}{\varphi-\abs{d\varphi}^2}}
	+\frac{\varphi^\alpha\varphi_s}{\abs{d\varphi}^2-\varphi}.
\end{equation*}
This equation shows that following proposition.
\begin{proposition}[\cite{Choi}] 
Any horizontal lift $v_G$ with respect to $G$ is smoothly extended up to the boundary $\partial D$. Moreover, $v_G\vert_{\partial D}$ is tangent to  $\partial D$.
\end{proposition}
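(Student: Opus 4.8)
The plan is to read both assertions directly off the closed-form expression for the horizontal-lift coefficients $g_{s\bar\beta}g^{\bar\beta\alpha}$ displayed immediately before the proposition. Since the horizontal lift is $v_G = \pd{}{s} - g_{s\bar\beta}g^{\bar\beta\alpha}\pd{}{z^\alpha}$, everything reduces to analyzing these coefficients on $\ov{D}$. Throughout I use the contraction identities $\varphi^{\bar\beta\alpha}\varphi_\alpha = \varphi^{\bar\beta}$ and $\varphi^\alpha\varphi_\alpha = \abs{d\varphi}^2$, which come from raising indices with the inverse Levi matrix, so that $\abs{d\varphi}^2$ is the squared length of $\partial_z\varphi$ in the Levi metric.

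For the smooth extension I would verify the two structural features of the formula. Each factor $\varphi_{s\bar\beta}$, $\varphi^{\bar\beta\alpha}$, $\varphi^\alpha$, $\varphi_s$ lies in $C^\infty(\ov{D})$: the defining function is smooth up to the boundary by (i), and the inverse Levi matrix $\paren{\varphi^{\bar\beta\alpha}}$ is well defined and smooth on all of $\ov{D}$ because $\paren{\varphi_{\alpha\bar\beta}}$ is positive definite there by (iii). The only possible singularity is the denominator $\varphi - \abs{d\varphi}^2$. In the interior it cannot vanish, since $\varphi < 0$ and $\abs{d\varphi}^2 \ge 0$; on $\partial D$ one has $\varphi = 0$, so it equals $-\abs{d\varphi}^2$, and conditions (iii) and (iv) force $\abs{d\varphi}^2 > 0$ there. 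Hence $\varphi - \abs{d\varphi}^2$ stays bounded away from $0$ on a one-sided neighborhood of $\partial D$ in $\ov{D}$, and $g_{s\bar\beta}g^{\bar\beta\alpha}$, and with it $v_G$, extends smoothly to $\ov{D}$.

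For the tangency I would compute $d\varphi(v_G)$ on $\partial D$. Because $v_G$ is of type $(1,0)$ and $\varphi$ is real, the real and imaginary parts of $v_G$ are tangent to $\partial D$ precisely when $d\varphi(v_G) = \partial\varphi(v_G) = \varphi_s - g_{s\bar\beta}g^{\bar\beta\alpha}\varphi_\alpha$ vanishes, so it suffices to show $g_{s\bar\beta}g^{\bar\beta\alpha}\varphi_\alpha = \varphi_s$ on $\partial D$. Contracting the displayed formula with $\varphi_\alpha$ and applying the two identities collapses it to
\[
g_{s\bar\beta}g^{\bar\beta\alpha}\varphi_\alpha
= \varphi_{s\bar\beta}\varphi^{\bar\beta}\cdot\frac{\varphi}{\varphi - \abs{d\varphi}^2}
+ \frac{\abs{d\varphi}^2\,\varphi_s}{\abs{d\varphi}^2 - \varphi}.
\]
Restricting to $\partial D$, where $\varphi = 0$ while $\abs{d\varphi}^2 \neq 0$, the first term vanishes and the second reduces to $\varphi_s$; thus $d\varphi(v_G) = 0$ and $v_G$ is tangent to $\partial D$.

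The only point demanding more than bookkeeping — and hence the crux — is the behavior of $\varphi - \abs{d\varphi}^2$ on $\partial D$. There $\varphi$ vanishes, so nothing protects this denominator from degenerating unless the Levi length $\abs{d\varphi}^2$ of $\partial_z\varphi$ is bounded below; establishing $\abs{d\varphi}^2 = \varphi^{\bar\beta\alpha}\varphi_\alpha\varphi_{\bar\beta} > 0$ on $\partial D$ is exactly where condition (iv) is used, combined with positive definiteness of the Levi form (iii). This single bound both secures the smooth extension and forces $\frac{\abs{d\varphi}^2}{\abs{d\varphi}^2 - \varphi} \to 1$ on $\partial D$, which is the cancellation that yields tangency; were $\partial_z\varphi$ allowed to vanish on the boundary, both conclusions would fail.
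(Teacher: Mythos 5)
Your proof is correct and follows essentially the same route as the paper, which derives the proposition directly from the displayed formula for $g_{s\bar\beta}g^{\bar\beta\alpha}$: smoothness of $v_G$ up to $\partial D$ comes from the nonvanishing of the denominator $\varphi-\abs{d\varphi}^2$ (guaranteed by conditions (iii) and (iv)), and tangency comes from contracting with $\varphi_\alpha$ and evaluating at $\varphi=0$. Your explicit identification of $\abs{d\varphi}^2=\varphi^{\bar\beta\alpha}\varphi_\alpha\varphi_{\bar\beta}$ (the Levi length of $\partial_z\varphi$, correcting the paper's apparent typo $g^{\alpha\bar\beta}$) and of the cancellation $\frac{\abs{d\varphi}^2}{\abs{d\varphi}^2-\varphi}\to1$ simply spells out what the paper leaves implicit.
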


\subsection{Fefferman's approximate solutions and the boundary behavior of the solution of complex Monge-Amp\`{e}re equation} \label{SS:boundary_behavior}
Let $\Omega$ be a bounded strongly pseudoconvex domain with smooth boundary. Given a smooth function $\zeta$ on $\Omega$, we define $J(\zeta)$ by
\begin{equation*}
J(\zeta)=(-1)^n\det
\left(
\begin{array}{cc}
\zeta & \zeta_{\bar\beta} \\
\zeta_\alpha & \zeta_{\alpha\bar\beta}
\end{array}
\right).
\end{equation*}
Note that if $\zeta>0$ in $\Omega$ and $g=-\log\zeta$, then it is easy to show that
\begin{equation*}
J(\zeta)=e^{-(n+1)g}\det\paren{g_{\alpha\bar\beta}}.
\end{equation*}
Consider the following problem:
\begin{equation} \label{E:Fefferman}
\begin{aligned}
J(\zeta)=1\;\;\text{on}\;\;\Omega, \\
\zeta=0\;\;\text{on}\;\;\partial\Omega.
\end{aligned}
\end{equation}
In \cite{Fefferman}, Fefferman developed a formal technique to find approximate solutions of \eqref{E:Fefferman}:
\medskip

Let $\rho$ be a defining function of $\Omega$ such that $d\rho\neq0$ on $\partial\Omega$. We define recursively
\begin{equation} \label{E:approximate_solution}
\begin{aligned}
\rho^1 & =-\rho\cdot\paren{J(-\rho)}^{-\frac{1}{n+1}}, \\
\rho^l & =\rho^{l-1}\paren{1+\frac{1-J(\rho^{l-1})}{(n+2-l)l}}
\;\;\text{for}\;\;\;2\le{l}\le{n+1}.
\end{aligned}
\end{equation}
Then $\rho^l$ satisfies the following properties:
\begin{itemize}
\item [(1)] Every $-\rho^l$ is also a defining function of $\Omega$. In particular, we may assume that every $\rho^l$ is considered as a smooth function defined on $\CC^n$.
\item [(2)] $J(\rho^l)=1+O(\abs\rho^l)$ for $l=1,\dots,n+1$, i.e., $\rho^l$ is an approximate solution of order $l$ for $l=1,\dots,n+1$.
\end{itemize}

By \eqref{E:approximate_solution}, we can write $-\rho^l=\eta\rho$ for some $\eta\in{C}^\infty(\bar\Omega)$. Let $w=-\log(-\eta\rho)$ and $J(-\eta\rho)=e^{-F}$.  Then we have 
\begin{equation*}
\det\paren{w_{\alpha\bar\beta}}=e^{(n+1)w}e^{-F},
\end{equation*}
and
\begin{equation} \label{E:initial}
F=-\log{J}(-\eta\rho)=-\log{J}(\rho^l)=O(\abs\rho^l).
\end{equation}
Since $\eta$ is positive near $\partial\Omega$, we know that $w$ is strictly plurisubharmonic when sufficiently close to the boundary and diverges on $\partial\Omega$. By modifying $w$ away from $\partial\Omega$, we may assume that $w$ is strictly plurisubharmonic on $\Omega$. We denote it by $w$ and again write $\det\paren{w_{\alpha\bar\beta}}=e^{(n+1)w}e^{-F}$. Thus $F$ is now a smooth function on $\Omega$ and still satisfies that condition \eqref{E:initial}. Again $\eta$ is understood to be a smooth function on $\bar\Omega$ such that $w=-\log(-\eta\rho)$. 

Cheng and Yau's theorem implies that we can solve the following equation:
\begin{equation} \label{E:Monge-Ampere}
\begin{aligned}
\det(w_{\alpha\bar\beta}+u_{\alpha\bar\beta})
=e^{(n+1)u}e^F\det(w_{\alpha\bar\beta}) \\
\frac{1}{c}(w_{\alpha\bar\beta})\le(w_{\alpha\bar\beta}+u_{\alpha\bar\beta}) \le c(w_{\alpha\bar\beta}).
\end{aligned}
\end{equation}
Note that $F=(n+1)w-\log\det(w_{\alpha\bar\beta})$. This implies that
\begin{equation*}
F_{\alpha\bar\beta}=(n+1)w_{\alpha\bar\beta}+R_{\alpha\bar\beta},
\end{equation*}
where $R_{\alpha\bar\beta}$ is the component of Ricci curvature tensor of the K\"ahler metric $w_{\alpha\bar\beta}$. It follows that $\sum\paren{w_{\alpha\bar\beta}+u_{\alpha\bar\beta}}dz^\alpha{dz}^{\bar\beta}$ is the unique complete K\"{a}hler-Einstein metric in $\Omega$. Cheng and Yau also described the boundary behavior of the solution $u$ of \eqref{E:Monge-Ampere}:

\begin{theorem}[Simple Version \cite{Cheng_Yau}] \label{T:CY2}
Suppose that $\Omega$ is a smooth strongly pseudoconvex domain in $\CC^n$ and $\rho$ is a smooth defining function of $\Omega$. Suppose that $F=\xi(-\rho)^k$, $1\le{k}\le{n+1}$, $\xi\in{C^\infty(\bar\Omega)}$. Suppose that $u$ is a solution of \eqref{E:Monge-Ampere}. Then
\begin{equation*}
\abs{D^pu}(x)=O(\abs{\rho}^{a/2-p})
\end{equation*}
where $a<\min(2n+1,2k)$ and $\abs{D^pu}(x)$ is the Euclidean length of the $p$-th derivative of $u$.
\end{theorem}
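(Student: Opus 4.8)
The plan is to regard $u$ as a controlled perturbation of the Fefferman background $w$ and to bound it, together with its derivatives, by comparison with explicit powers of the defining function, using the generalized maximum principle on the complete K\"ahler manifold $(\Omega, w_{\alpha\bar\beta})$. First I would rewrite \eqref{E:Monge-Ampere} as
\begin{equation*}
\log\frac{\det\paren{w_{\alpha\bar\beta}+u_{\alpha\bar\beta}}}{\det\paren{w_{\alpha\bar\beta}}}=(n+1)u+F,
\end{equation*}
and exploit the concavity of $\log\det$ on positive Hermitian matrices. Writing $\Delta_w=w^{\alpha\bar\beta}\partial_\alpha\partial_{\bar\beta}$ and $\Delta_{w+u}=\paren{w+u}^{\alpha\bar\beta}\partial_\alpha\partial_{\bar\beta}$ for the two complex Laplacians, concavity yields the two-sided inequality
\begin{equation*}
\Delta_{w+u}u\le(n+1)u+F\le\Delta_w u.
\end{equation*}
The ellipticity bounds in \eqref{E:Monge-Ampere} make the two Laplacians uniformly comparable, and the explicit form $w=-\log(-\eta\rho)$ shows that $w_{\alpha\bar\beta}$ is a complete metric with Ricci curvature bounded below; this is precisely what is needed to apply the Cheng--Yau generalized maximum principle to functions of the form $u-\Psi$.

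Second, I would establish the $C^0$ decay $\abs u=O\paren{\abs\rho^{a/2}}$. The natural barriers are powers $\Psi=C(-\rho)^t$ of the defining function. Writing $r=-\rho$, a direct computation using \eqref{E:inverse} together with the boundary asymptotics $w^{\alpha\bar\beta}r_\alpha r_{\bar\beta}=r^2\paren{1+O(r)}$ and $w^{\alpha\bar\beta}r_{\alpha\bar\beta}=-(n-1)r\paren{1+O(r)}$ yields
\begin{equation*}
\paren{\Delta_w-(n+1)}r^t=\paren{t(t-n)-(n+1)}r^t+O\paren{r^{t+1}}.
\end{equation*}
The quadratic $t^2-nt-(n+1)$ has roots $t=n+1$ and $t=-1$, so the homogeneous solution governing boundary decay is $r^{n+1}$; on the interval $-1<t<n+1$ the bracket is negative, whence $\Psi=Cr^t$ is a supersolution of $\paren{\Delta_w-(n+1)}\Psi\le F$ once $t\le k$ makes $\Psi$ dominate the source $F=O\paren{r^k}$. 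Applying the generalized maximum principle to $u-\Psi$ and to $-u-\Psi$ then forces $\abs u\le\Psi$, since a positive interior supremum of $u-\Psi$ would contradict $\paren{\Delta_w-(n+1)}(u-\Psi)\ge0$ there. Choosing $t=a/2$ with $a<\min(2n+1,2k)$ gives the asserted decay.

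Third, I would upgrade the $C^0$ bound to the derivative estimates $\abs{D^pu}=O\paren{\abs\rho^{a/2-p}}$ by a rescaling argument adapted to the anisotropy of the complete metric. At a point $x$ with $r(x)=\delta$ small, I rescale the normal direction by $\delta$ and the tangential directions by $\sqrt\delta$, so that a fixed ball in the metric $w_{\alpha\bar\beta}$ becomes a Euclidean region of bounded geometry on which the rescaled Monge--Amp\`ere equation has uniformly elliptic, uniformly bounded coefficients. The Evans--Krylov estimate for the concave operator $\log\det$, followed by Schauder bootstrapping, then bounds all covariant derivatives of $u$ on the unit ball by its $C^0$ norm there, which is $O\paren{\delta^{a/2}}$ by the previous step. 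Undoing the rescaling converts each Euclidean derivative into a loss of at most one power of $\delta$ (the worst case being purely normal derivatives), giving $\abs{D^pu}(x)=O\paren{\delta^{a/2-p}}=O\paren{\abs\rho^{a/2-p}}$; here one also uses that $w$ and $F=\xi(-\rho)^k$ have controlled derivatives, which is immediate from their explicit form.

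The main obstacle is the sharp $C^0$ estimate, and in particular the precise threshold $a<\min(2n+1,2k)$. The constraint $a<2k$ is dictated transparently by the decay of the source $F$, but the competing constraint $a<2n+1$ is more delicate: the indicial root at $t=n+1$ naively suggests decay all the way up to $r^{n+1}$, yet at the critical order $k=n+1$ a logarithmic resonance term of type $r^{n+1}\log(1/r)$ appears and obstructs a clean power bound at the endpoint. The role of the subcritical exponent $2n+1$ is exactly to keep the barrier $r^{a/2}$ strictly below this resonance while remaining a supersolution compatible with the completeness required by the maximum principle, and checking that the error terms $O\paren{r^{t+1}}$ in the barrier computation do not spoil this comparison is the heart of the matter. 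By contrast, the rescaling step is essentially routine once the uniform bounded geometry of $w_{\alpha\bar\beta}$ near the boundary has been verified.
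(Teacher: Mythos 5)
You should first note what you are comparing against: the paper does not prove this statement at all. It is imported verbatim from Cheng--Yau \cite{Cheng_Yau} (with the label ``Simple Version''), and its proof there occupies a large part of that paper, via weighted H\"older spaces on the quasi-coordinate charts of Definition 1.1 in \cite{Cheng_Yau} and an iteration scheme. So your proposal is not an alternative to an argument in this paper; it is an attempt to reprove a deep external theorem, and it has a genuine gap at exactly the point where the real work lies.

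The gap is in the $C^0$ comparison, specifically the lower bound on $u$. Your upper bound uses $\Delta_w u\ge (n+1)u+F$ together with the indicial computation $\paren{\Delta_w-(n+1)}r^t=\paren{t(t-n)-(n+1)}r^t+O\paren{r^{t+1}}$, and that half is essentially sound: the roots $t=-1,\,n+1$ and Yau's generalized maximum principle give $u\le Cr^t$ for $t<\min(k,n+1)$. But the opposite inequality only comes from $\Delta_{w+u}u\le (n+1)u+F$, so the barrier must be a supersolution for the Laplacian of the \emph{unknown} metric $w_{\alpha\bar\beta}+u_{\alpha\bar\beta}$. Your claim that the ellipticity bounds $\frac{1}{c}(w_{\alpha\bar\beta})\le (w_{\alpha\bar\beta}+u_{\alpha\bar\beta})\le c(w_{\alpha\bar\beta})$ are ``precisely what is needed'' is false: uniform comparability only gives $\Delta_{w+u}r^t\le \bigl(c\,t(t-1)-\tfrac{1}{c}t(n-1)\bigr)r^t+O\paren{r^{t+1}}$ for $t\ge 1$, and for $c$ large this coefficient exceeds $n+1$ well before $t$ reaches $n+1$ (e.g.\ $n=3$, $t=3$, $c=2$ gives $9>4$), so the barrier fails on the upper part of the claimed range. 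To recover the sharp indicial coefficient $t(t-n)$ for $\Delta_{w+u}$ you need $(w+u)^{\bar\beta\alpha}=w^{\bar\beta\alpha}(1+o(1))$ near $\partial\Omega$, i.e.\ decay of $u_{\alpha\bar\beta}$ --- which is part of the conclusion you are trying to prove. This circularity is why Cheng--Yau do not argue by a one-shot barrier but by a bootstrap in weighted H\"older norms (one can repair your sketch along those lines: first run your argument for small $t\in(0,1)$, where the robust computation does give a negative coefficient, then use your rescaled Evans--Krylov/Schauder step to convert $C^0$ decay into decay of $u_{\alpha\bar\beta}$, and iterate --- but none of this is in your proposal). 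Finally, you yourself flag that the threshold $a<2n+1$ (rather than $2n+2$, which is what your barrier range $t<n+1$ would give) is ``the heart of the matter,'' and you leave it unproved; that half-power loss in \cite{Cheng_Yau} comes out of their weighted Schauder theory, in which tangential Euclidean directions scale like $\abs{\rho}^{1/2}$, and is not produced by any step you describe.
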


Now suppose $u$ be a solution to \eqref{E:Monge-Ampere} with $w=-\log(-\rho^{n+1})=-\log(-\eta\rho)$ and $F=-\log{J}(-\eta\rho)$. Then we know that 
\begin{equation*}
F=-\log{J}(-\eta\rho)=-\log\paren{1+\xi(-\rho)^{n+1}}
\end{equation*}
for some $\xi\in C^\infty(\Omega)$. Then Theorem \ref{T:CY2} says that
\begin{equation*}
\abs{D^pu}(x)=O(\abs{\rho}^{n+1/2-p-b})
\end{equation*}
for $b>0$. In particular, we have
\begin{equation} \label{E:boundary}
\abs{u_{\alpha\bar\beta}}\le O(\abs\rho^{n-3/2-b})
\end{equation}
for $b>0$. The above discussion also implies that 
\begin{equation*}
u_{\alpha\bar\beta}\in{C}^\infty(\Omega)\cap{C}^{n-3/2-b}(\bar\Omega),
\end{equation*}
for $b>0$ and $1\le\alpha,\beta\le{n}$.

\section{Subharmonicity of K\"ahler-Einstein metrics on strongly pseudoconvex domains}

In this section, we shall discuss about Theorem \ref{T:main_theorem}. More precisely, we will prove the following:
\begin{theorem}\label{T:main_theorem'}
If every boundary point of $D_s$ is a strongly pseudoconvex boundary point of $D$, then $h$ is strictly plurisubharmonic near $D_s$.
\end{theorem}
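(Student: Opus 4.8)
The plan is to reduce the statement to the positivity of the geodesic curvature of the K\"ahler-Einstein form and then to establish that positivity by a fiberwise maximum principle, using the strongly pseudoconvex boundary to control the behavior at infinity. Writing $\tau:=\im\ddbar h$, the K\"ahler-Einstein condition makes $\tau$ positive definite on every slice, so by \eqref{E:slice_positive} the strict plurisubharmonicity of $h$ near $D_s$ is equivalent to the pointwise positivity of the geodesic curvature $c(\tau)$ near $D_s$. It therefore suffices to prove $c(\tau)(z,s')>0$ for every $(z,s')$ in a neighborhood of $D_s$; since strong pseudoconvexity of boundary points is an open condition, every nearby slice $D_{s'}$ again has all of its boundary points strongly pseudoconvex in $D$, and the argument can be carried out fiber by fiber with $s'$ playing the role of a parameter.

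The first main step is to derive the elliptic equation satisfied by $c(\tau)$ on each fiber. Differentiating the K\"ahler-Einstein equation $\det(h_{\gamma\bar\delta})=e^{(n+1)h}$ once in $s$ gives $\Delta h_s=(n+1)h_s$, and differentiating once more in $\bar s$ produces a second-order equation for $h_{s\bar s}$. By itself $h_{s\bar s}$ carries the wrong sign for a maximum principle; the point, following the Bochner-type computation of Schumacher and Berndtsson, is that subtracting the horizontal correction $h_{s\bar\beta}h^{\bar\beta\alpha}h_{\alpha\bar s}$ and invoking the Einstein condition (which identifies the Ricci tensor with $-(n+1)$ times the metric) turns the curvature terms into the favorable sign. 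I expect to obtain, on each fiber,
\begin{equation*}
(n+1)\,c(\tau)-\Delta\,c(\tau)=\norm{\bar\partial v_\tau}^2\ge0,
\end{equation*}
where $\Delta$ is the complex Laplacian of the fiberwise K\"ahler-Einstein metric and $\bar\partial v_\tau$ is the fiber restriction of $\bar\partial$ of the horizontal lift, i.e. the Kodaira-Spencer representative. The strictly positive zeroth-order coefficient $n+1$ is exactly what makes the operator amenable to the maximum principle.

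The second, and hardest, step is the boundary behavior of $c(\tau)$. Using Fefferman's approximate solution $\rho^{n+1}=-\eta\rho$ from \eqref{E:approximate_solution} I decompose the potential as $h=w+u$ with $w=-\log(-\eta\rho)$ and $u$ the solution of the Monge-Amp\`ere equation \eqref{E:Monge-Ampere}, so that $\tau=\im\ddbar w+\im\ddbar u$. For the model form $\im\ddbar w$, a direct computation of the geodesic curvature in terms of $\varphi$ (as begun in Section~\ref{SS:GC}) shows that its leading behavior as $\varphi\to0$ is $(-\varphi)^{-1}$ times the Levi form of the defining function evaluated on the boundary value of the horizontal lift; the singular $(-\varphi)^{-2}$ contributions cancel just as they do for $g=-\log(-\varphi)$. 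By the same argument as in Section~\ref{SS:GC} this boundary value is tangent to $\partial D$, and since every boundary point of $D_s$ is a strongly pseudoconvex boundary point of $D$, the Levi form there is strictly positive; hence the model geodesic curvature blows up like a positive multiple of $(-\varphi)^{-1}$. The delicate part is to show that the contribution of $u$ to $c(\tau)$ is of lower order than this blow-up. This is precisely where the dimension enters, and is the main obstacle of the whole proof: the estimate \eqref{E:boundary} only gives $\abs{u_{\alpha\bar\beta}}=O(\abs{\rho}^{n-3/2-b})$, which for $n=2$ barely tends to zero, so one must verify that even this weak decay keeps the $u$-terms $o((-\varphi)^{-1})$ and thereby concludes $c(\tau)\to+\infty$ at $\partial D_{s'}$.

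Granting this boundary blow-up, the proof concludes by the fiberwise maximum principle. For each $s'$ near $s$ the set $\{c(\tau)(\cdot,s')\le0\}$ is then a compact subset of $D_{s'}$, so if $c(\tau)$ were somewhere non-positive its infimum would be attained at an interior point $p$. There $\Delta c(\tau)(p)\ge0$, and the equation forces $(n+1)c(\tau)(p)=\norm{\bar\partial v_\tau}^2(p)+\Delta c(\tau)(p)\ge0$, whence $c(\tau)(p)\ge0$ and therefore $c(\tau)(p)=0$. The strong maximum principle for the operator $\Delta-(n+1)$ then makes $c(\tau)$ identically zero on the fiber, contradicting the boundary blow-up. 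Hence $c(\tau)>0$ on every fiber near $D_s$, which by \eqref{E:slice_positive} is the desired strict plurisubharmonicity of $h$ near $D_s$.
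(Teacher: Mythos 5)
Your outline follows the paper's proof quite closely: reduction to positivity of the geodesic curvature via \eqref{E:slice_positive}, Schumacher's equation \eqref{E:PDE}, the Fefferman decomposition $h=w+u$, blow-up of the geodesic curvature at the strongly pseudoconvex boundary, and a maximum principle. Your closing step is even a legitimate simplification: since the blow-up makes $\{c(H)\le 0\}$ compact, you can run the interior-minimum argument and Hopf's strong maximum principle directly, where the paper instead invokes Yau's almost maximum principle, real analyticity of K\"ahler--Einstein metrics (DeTurck--Kazdan), and Proposition \ref{P:uniqueness}. However, there is a genuine gap at the step you defer with ``one must verify'': the assertion that the $u$-contribution to $c(H)$ is $o(c(W))$, i.e.\ Proposition \ref{P:ratio_boundary_behavior}, cannot be obtained from the estimate you propose to use. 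The bound \eqref{E:boundary}, $\abs{u_{\alpha\bar\beta}}=O(\abs{\rho}^{n-3/2-b})$, is a purely fiberwise statement (Cheng--Yau boundary regularity in the $z$-directions on a fixed slice), while the geodesic curvature $c(H)=h_{s\bar s}-h_{s\bar\beta}h^{\bar\beta\alpha}h_{\alpha\bar s}$ involves the mixed base-fiber derivatives $u_{s\bar s}$ and $u_{s\bar\beta}$. No decay of $u_{\alpha\bar\beta}$ alone controls these; they are not covered by Theorem \ref{T:CY2} at all.

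Controlling those $s$-derivatives is exactly the technical content of the paper, and it is where the restriction to $n=2$ bites. One differentiates the family of Monge--Amp\`ere equations \eqref{E:Monge-Ampere''} in $s$ to get the linear elliptic equation \eqref{E:linear_equation}, $-\Delta u_s+(n+1)u_s=Q$ with $Q=O(\abs{\varphi}^{n-3/2-b})$; one then needs the barrier estimate of Proposition \ref{P:second}, $\abs{u_s}=O(\abs{\varphi}^r)$ whenever $Q=O(\abs{\varphi}^r)$ with $0<r\le1$ --- the case $0<r<1$ is precisely what is new in this paper, since for $n\ge3$ one may take $r=1$ as in \cite{Choi}, whereas for $n=2$ only $r=1/2-b'$ is available; finally, the weighted Schauder estimate \eqref{E:Schauder} in Cheng--Yau's special coordinates converts the sup bound on $u_s$ into derivative bounds such as $\sup\abs{\sum_\alpha\partial u_s/\partial z^\alpha}=O(\abs{\varphi}^{-1/2-b'})$, which combined with Lemmas \ref{L:identity1} and \ref{L:identity2} show that $R_2$ stays bounded and $R_1/c(W)\to0$. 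Without this chain (or a substitute for it), your claim that $c(H)\to+\infty$ at $\partial D_s$ is unsupported for $n=2$, and both the compactness of your sublevel set and the final contradiction rest on that claim.
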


\begin{remark}
The above theorem have been already proved if the slice dimension is greater than or equal to $3$ in \cite{Choi}. In fact, a little more is proved in \cite{Choi}. This will be discussed in Section \ref{S:remark}.
\end{remark}

\subsection{The geodesic curvature from the approximate K\"ahler-Einstein metrics}

Let $D$ be a smooth domain in $\CC^{n+1}$ such that every slice $D_s$ is strongly pseudoconvex domain. Suppose that every boundary point of $D_s$ is a strongly pseudoconvex  boundary point of the total space $D$. Then every slice $D_{s'}$ which is sufficiently close to $D_s$  has such property. Since our computation is always local in $s$-variable, we may assume that $\pi(D)=U$ and there exists a defining function $\varphi$ which satisfies the conditions in Subsection \ref{SS:GC}. By the argument in Subsection 2.3, we know that there exist approximate solutions $\varphi^{n+1}(\cdot,s)$ such that
\begin{equation} \label{E:condition_of_J}
J\paren{\varphi^{n+1}(\cdot,s)}=1+O\paren{\abs{\varphi(\cdot,s)}^{n+1}},
\end{equation}
for every $s\in U$. By \eqref{E:approximate_solution}, there exists a smooth positive function $\eta$ on $\bar D$ such that
$\varphi^{n+1}(\cdot,s)=-\eta(\cdot,s)\varphi(\cdot,s)$. Hence $\eta(\cdot,s)\varphi(\cdot,s)$ is another defining function of $D_s$ for each $s\in U$. We call it $\psi(\cdot,s)$. Since every slice $D_s$ is strongly pseudoconvex, $w=-\log(-\psi)=-\log(-\eta\varphi)$ is strictly plurisubharmonic in each slice $D_s$ when sufficiently close to the boundary. It is easy to see that $w$ can be modified away from $\partial D$ to a smooth function on $D$, which is strictly plurisubharmonic when restricted on each slice $D_s$ for $s\in U$ (by shrinking $U$, if necessary); we again denote it by $w$ (cf, see \cite{Demailly}). Now let $e^{-F}=J(-\eta\varphi)$. Then $F$ is a smooth function on $\bar D$ and satisfies that
\begin{equation*}
\det\paren{w_{\alpha\bar\beta}(z,s)}=e^{(n+1)w(z,s)}e^{-F(z,s)},
\end{equation*} Ê
and \eqref{E:condition_of_J} implies that
\begin{equation*}
F(\cdot,s)=\xi(\cdot,s)\varphi(\cdot,s)^{n+1},
\end{equation*}
for each $s\in U$, where $\xi$ is a smooth function on $\bar D$. Again $\eta$ is understood to be a smooth function on $\bar D$ such that
$w=-\log(-\eta\varphi)$. So $w_{\alpha\bar\beta}=g_{\alpha\bar\beta}-\paren{\log\eta}_{\alpha\bar\beta}$. We can solve a family of complex Monge-Amp\`{e}re equations: 
\begin{equation} \label{E:Monge-Ampere''}
	\begin{aligned}
	\det(w_{\alpha\bar\beta}(\cdot,s)+u_{\alpha\bar\beta}(\cdot,s))
		=e^{Ku(\cdot,s)}e^{F(\cdot,s)}\det(w_{\alpha\bar\beta}(\cdot,s)), \\
	\frac{1}{c}(w_{\alpha\bar\beta}(\cdot,s))
		\le(w_{\alpha\bar\beta}(\cdot,s)+u_{\alpha\bar\beta}(\cdot,s)) 
		\le{c}(w_{\alpha\bar\beta}(\cdot,s)).
	\end{aligned}
\end{equation}
We denote by $u(\cdot,s)$ the solution of \eqref{E:Monge-Ampere''} for each $s\in U$. By Theorem \ref{T:CY2} and \eqref{E:boundary}, for each slice $D_s$, we have the following boundary behavior of the solution $u$:
\begin{equation*} 
\abs{u_{\alpha\bar\beta}(\cdot,s)}\le O(\abs{\varphi(\cdot,s)}^{n-3/2-b})
\end{equation*}
for $b>0$.

Now we define a real $(1,1)$-form $W$ by $W=\im\ddbar w$. We can write $W$ as follows:
\begin{equation*}
W
=
\im\paren{w_{s\bar s}ds\wedge{d\bar s} 
+w_{s\bar\beta}ds\wedge{dz}^{\bar\beta} 
+w_{\alpha\bar s}dz^\alpha\wedge{d\bar s} 
+w_{\alpha\bar\beta}dz^\alpha\wedge{dz}^{\bar\beta}
}.
\end{equation*}
To observe the horizontal lift $v_W$ and the geodesic curvature $c(W)$, we need to compute the inverse of $w_{\alpha\bar\beta}$. 
\begin{lemma}[\cite{Choi}]\label{L:identity1}
There exists a hermitian $n\times{n}$ matrix 
\begin{equation*}
M
=
(M_{\alpha\bar\beta})\in\mathrm{Mat}_{n\times{n}}\paren{C^\infty(\bar D)},
\end{equation*}
which satisfies that
\begin{equation*}
w^{\bar\beta\alpha}-g^{\bar\beta\alpha}
=
g^{\bar\beta\gamma}M_{\gamma\bar\delta}g^{\bar\delta\alpha}.
\end{equation*}
In particular, $w^{\bar\beta\alpha}\in{C}^\infty(\bar{D})$ and $w^{\bar\beta\alpha}=O(\abs\varphi)$.
\end{lemma}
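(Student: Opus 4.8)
The plan is to obtain $w^{\bar\beta\alpha}$ from $g^{\bar\beta\alpha}$ by a resolvent (Neumann series) expansion, exploiting that the two potentials differ by a \emph{bounded} correction. Since $g=-\log(-\varphi)$ and $w=-\log(-\eta\varphi)=g-\log\eta$, the paper has already recorded that
\begin{equation*}
w_{\alpha\bar\beta}=g_{\alpha\bar\beta}-P_{\alpha\bar\beta},\qquad P_{\alpha\bar\beta}:=\paren{\log\eta}_{\alpha\bar\beta}.
\end{equation*}
Because $\eta$ is a smooth positive function on $\bar D$ (it comes from Fefferman's construction \eqref{E:approximate_solution}), the Hermitian matrix $P=(P_{\alpha\bar\beta})$ has entries in $C^\infty(\bar D)$ and is bounded with all its derivatives. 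Writing $g=(g_{\alpha\bar\beta})$ and $w=(w_{\alpha\bar\beta})$ as matrices, the natural candidate is
\begin{equation*}
M:=g\paren{w^{-1}-g^{-1}}g=gw^{-1}g-g,
\end{equation*}
which is manifestly Hermitian because $w^{-1}$ and $g^{-1}$ are, and which satisfies the asserted identity $w^{\bar\beta\alpha}-g^{\bar\beta\alpha}=g^{\bar\beta\gamma}M_{\gamma\bar\delta}g^{\bar\delta\alpha}$ tautologically. The entire content of the lemma is thus the regularity and boundedness of $M$ up to $\bar D$.

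To establish this near $\partial D$, I would factor $w=g-P=g\paren{I-g^{-1}P}$. The explicit inverse \eqref{E:inverse} shows that $g^{\bar\beta\alpha}$ extends smoothly to $\bar D$ and is $O(\abs\varphi)$; hence $g^{-1}P=O(\abs\varphi)$ and its operator norm tends to $0$ as one approaches $\partial D$. Consequently $I-g^{-1}P$ is invertible in a one-sided neighborhood of $\partial D$, with $\paren{I-g^{-1}P}^{-1}$ smooth and bounded there by the convergent Neumann series. A short manipulation using $g\paren{I-g^{-1}P}g^{-1}=I-Pg^{-1}$ then gives the closed form
\begin{equation*}
M=\paren{I-Pg^{-1}}^{-1}P=P\paren{I-g^{-1}P}^{-1},
\end{equation*}
the two expressions agreeing because $\paren{I-Pg^{-1}}P=P\paren{I-g^{-1}P}$; taking adjoints of either form confirms $M^*=M$. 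This exhibits $M$ as smooth and bounded in a neighborhood of the boundary.

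In the interior, $w$ is strictly plurisubharmonic on each slice after the modification described above and $g$ is positive definite, so $w^{-1}$, $g^{-1}$, and hence $M=gw^{-1}g-g$ are smooth on compact subsets of $D$; the two descriptions of $M$ agree on the overlap. Patching and using compactness of $\bar D$ yields $M\in\mathrm{Mat}_{n\times n}\paren{C^\infty(\bar D)}$, which is the main claim. The ``in particular'' statement is then immediate: since $g^{\bar\beta\alpha}=O(\abs\varphi)$ and $M=O(1)$, the correction $g^{\bar\beta\gamma}M_{\gamma\bar\delta}g^{\bar\delta\alpha}$ is $O(\abs\varphi^2)$, so $w^{\bar\beta\alpha}=g^{\bar\beta\alpha}+O(\abs\varphi^2)$ lies in $C^\infty(\bar D)$ and is $O(\abs\varphi)$.

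The step I expect to be the main obstacle is controlling $M$ uniformly up to $\partial D$. Everything hinges on the first-order vanishing $g^{\bar\beta\alpha}=O(\abs\varphi)$ furnished by \eqref{E:inverse}, together with the boundedness of $P$, which is precisely what keeps $I-g^{-1}P$ invertible at the boundary and validates the Neumann series. Were $\eta$ not smooth and positive on all of $\bar D$, or were $g^{-1}$ not to extend smoothly with this first-order decay, the resolvent argument would fail and $M$ could be unbounded; it is the strong pseudoconvexity of every boundary point, entering through the structure of \eqref{E:inverse} and Fefferman's construction, that rules this out.
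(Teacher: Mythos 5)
Your proof is correct: the decomposition $w_{\alpha\bar\beta}=g_{\alpha\bar\beta}-(\log\eta)_{\alpha\bar\beta}$ with $\log\eta$ smooth on $\bar D$, the vanishing $g^{\bar\beta\alpha}=O(\abs\varphi)$ supplied by \eqref{E:inverse}, the resulting invertibility of $I-g^{-1}P$ near $\partial D$ with Hermitian resolvent formula $M=(I-Pg^{-1})^{-1}P=P(I-g^{-1}P)^{-1}$, and the patching with $M=gw^{-1}g-g$ in the interior together give exactly the claimed smoothness, Hermiticity, and the order-of-vanishing consequences. The paper itself does not reproduce a proof (the lemma is quoted from \cite{Choi}), but your factorization $w=g(I-g^{-1}P)$ is essentially the argument underlying the cited proof, so the approaches coincide.
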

With the help of the above lemma, we can show that $v_W$ has the same properties with $v_G$.

\begin{proposition} \label{P:horizontal_lift}
Any horizontal lift $v_W$ with respect to $W$ is smoothly extended up to the boundary $\partial D$. Moreover, $v_W\vert_{\partial D}$ is tangent to  $\partial{D}$.
\end{proposition}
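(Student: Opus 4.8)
The plan is to treat the passage from the complete metric $g$ to $w$ as a boundary-regular perturbation, thereby reducing the statement to the corresponding proposition for $v_G$ in Subsection~\ref{SS:GC}. Since $v_W=\pd{}{s}-w_{s\bar\beta}w^{\bar\beta\alpha}\pd{}{z^\alpha}$, the two assertions amount to saying that the coefficient $w_{s\bar\beta}w^{\bar\beta\alpha}$ extends smoothly to $\bar D$ and that $v_W(\varphi)=\varphi_s-w_{s\bar\beta}w^{\bar\beta\alpha}\varphi_\alpha$ vanishes on $\partial D$. From $w=g-\log\eta$ with $\eta\in C^\infty(\bar D)$ positive I record $w_{s\bar\beta}=g_{s\bar\beta}-(\log\eta)_{s\bar\beta}$, where $(\log\eta)_{s\bar\beta}$ is already smooth up to the boundary.

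First I would substitute the identity of Lemma~\ref{L:identity1}, $w^{\bar\beta\alpha}=g^{\bar\beta\alpha}+g^{\bar\beta\gamma}M_{\gamma\bar\delta}g^{\bar\delta\alpha}$, into the coefficient, obtaining
\begin{equation*}
w_{s\bar\beta}w^{\bar\beta\alpha}
=g_{s\bar\beta}g^{\bar\beta\alpha}
+\paren{g_{s\bar\beta}g^{\bar\beta\gamma}}M_{\gamma\bar\delta}g^{\bar\delta\alpha}
-(\log\eta)_{s\bar\beta}w^{\bar\beta\alpha}.
\end{equation*}
The first summand is the coefficient of $v_G$, which the proposition for $v_G$ in Subsection~\ref{SS:GC} shows to be smooth up to the boundary. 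The crucial feature is that the singular factor $g_{s\bar\beta}$, of order $\abs\varphi^{-2}$, enters only through the combinations $g_{s\bar\beta}g^{\bar\beta\alpha}$ and $g_{s\bar\beta}g^{\bar\beta\gamma}$, which that same computation shows to be smooth. Indeed \eqref{E:inverse} gives $g^{\bar\delta\alpha}=O(\abs\varphi)$, and by Lemma~\ref{L:identity1} also $w^{\bar\beta\alpha}=O(\abs\varphi)$, while $M_{\gamma\bar\delta}$ and $(\log\eta)_{s\bar\beta}$ are smooth and bounded on $\bar D$. Hence the two remaining summands are products of factors smooth up to $\partial D$, which establishes the smooth extension and, moreover, exhibits $w_{s\bar\beta}w^{\bar\beta\alpha}-g_{s\bar\beta}g^{\bar\beta\alpha}$ as $O(\abs\varphi)$.

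For the tangency I would contract the same identity with $\varphi_\alpha$. The two correction terms then carry the factors $g^{\bar\delta\alpha}\varphi_\alpha$ and $w^{\bar\beta\alpha}\varphi_\alpha$ respectively; since $g^{\bar\delta\alpha}$ and $w^{\bar\beta\alpha}$ are smooth on $\bar D$ and vanish on $\partial D$ (being $O(\abs\varphi)$), both contractions vanish on $\{\varphi=0\}=\partial D$, while the accompanying cofactors remain bounded. Therefore $w_{s\bar\beta}w^{\bar\beta\alpha}\varphi_\alpha=g_{s\bar\beta}g^{\bar\beta\alpha}\varphi_\alpha$ on $\partial D$, so that $v_W(\varphi)=v_G(\varphi)$ there; the proposition for $v_G$ gives $v_G(\varphi)=0$ on $\partial D$, and the tangency of $v_W$ follows.

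The argument is routine once Lemma~\ref{L:identity1} and the smoothness of $g_{s\bar\beta}g^{\bar\beta\alpha}$ are in hand, so there is no single hard step. The only point requiring genuine care is the bookkeeping that keeps the singular factor $g_{s\bar\beta}$ always paired with an inverse-metric factor $g^{\bar\beta\gamma}$, rather than standing alone, so that the cancellation built into the $v_G$ computation applies; this is precisely the function of the decomposition in Lemma~\ref{L:identity1}. The same bookkeeping separates the smoothness part, for which mere boundedness of the cofactors suffices, from the tangency part, for which the genuine boundary vanishing of $g^{\bar\delta\alpha}\varphi_\alpha$ and $w^{\bar\beta\alpha}\varphi_\alpha$ is essential.
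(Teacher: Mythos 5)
Your proof is correct and takes essentially the same route as the paper: both rest on the decomposition of $w_{s\bar\beta}w^{\bar\beta\alpha}$ via $w_{s\bar\beta}=g_{s\bar\beta}-(\log\eta)_{s\bar\beta}$ and the identity of Lemma~\ref{L:identity1}, together with the smoothness of $g_{s\bar\beta}g^{\bar\beta\alpha}$ from Subsection~\ref{SS:GC} and the vanishing $g^{\bar\beta\alpha},\,w^{\bar\beta\alpha}=O(\abs\varphi)$, yielding $v_W(\varphi)-v_G(\varphi)=O(\abs\varphi)$. The only cosmetic difference is that the paper settles the smooth-extension claim in one line (since $w=g-\log\eta$ with $\eta$ smooth up to $\partial D$) and uses the decomposition only for the tangency, whereas you run both assertions through the same decomposition.
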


\begin{proof}
Note that $v_W$ is written by
\begin{equation*}
v_W
=
\pd{}{s}-w_{s\bar\beta}w^{\bar\beta\alpha}\pd{}{z^\alpha}.
\end{equation*}
Since $w=-\log(-\eta\varphi)=g-\log\eta$ and $\eta$ is smooth up to the boundary, $v_W$ is smoothly extended up to the boundary. Moreover,
\begin{align*}
v_W(\varphi)-v_G(\varphi)
&=
g_{s\bar\beta}g^{\bar\beta\alpha}\varphi_\alpha
-
w_{s\bar\beta}w^{\bar\beta\alpha}\varphi_\alpha \\
&=
g_{s\bar\beta}(g^{\bar\beta\alpha}-w^{\bar\beta\alpha})\varphi_\alpha
+
(\log\eta)_{s\bar\beta}w^{\bar\beta\alpha}\varphi_\alpha \\
&=
g_{s\bar\beta}g^{\bar\beta\gamma}M_{\gamma\bar\delta}g^{\bar\delta\alpha}\varphi_\alpha
+
(\log\eta)_{s\bar\beta}w^{\bar\beta\alpha}\varphi_\alpha \\
&=
O(\abs\varphi),
\end{align*}
this completes the proof.
\end{proof}

Recall that the geodesic curvature of $c(W)$ is given by
\begin{equation*}
c(W)
=
\inner{v_W,v_W}_W
=
w_{s\bar{s}}-w_{s\bar\beta}w^{\bar\beta\alpha}w_{\alpha\bar{s}}.
\end{equation*}
By the definition of Levi form, the geodesic curvature $c(W)$ is computed as follows:
\begin{align*}
\inner{v_W,v_W}_W
&=
\im\ddbar{w}(v_W,\overline{v_W})
=
\mathcal{L}w(v_W,\overline{v_W})\\
&=
\frac{1}{-\psi}\mathcal{L}\psi(v_W,\overline{v_W})
+\frac{1}{\psi^2}\abs{\partial\psi(v_W)}^2.
\end{align*}
\begin{remark} \label{R:Levi_form}
We can observe the following:
\begin{itemize}
\item[(1)] Since $v_W$ is tangent to $\partial D$, $\partial\varphi(v_W)\vert_{\partial D}=0$.
\item[(2)] Since $D$ is a smooth pseudoconvex  domain, $\mathcal{L}\psi(v_W,\overline{v_W})\ge0$ on $\partial D$. It follows that $c(W)\ge0$.
\item[(3)] If $D$ is strongly pseudoconvex at $p\in\partial{D}_s$, then $\mathcal{L}\psi(v_W,\overline{v_W})\vert_p>0$. It follows that 
	\begin{equation*}
	\frac{1}{-\psi(z,s)}\mathcal{L}\psi(v_W,\overline{v_W})\vert_{(z,s)}
	\rightarrow\infty
	\end{equation*}
as $(z,s)\rightarrow{p}$. In particular, $c(W)(z,s)\rightarrow\infty$ as $(z,s)\rightarrow{p}$.
\end{itemize}
\end{remark}

\subsection{Proof of Theorem \ref{T:main_theorem'}}

As we mentioned in Introduction, we denote by $h_{\alpha\bar\beta}(z,s)$ a unique complete K\"ahler-Einstein metric on a slice $D_s$. And we also denote by a function $h:D\rightarrow\RR$ defined by
\begin{equation*}
h(z,s)
=
\frac{1}{n+1}\log\det\paren{h_{\gamma\bar\delta}(z,s)}_{1\le\gamma,\delta\le{n}}.
\end{equation*}
If we define a real $(1,1)$-form $H$ by $H=\im\ddbar h$, then $H$ is a real $(1,1)$-form on D such that the restriction on each slice $D_s$ is positive-definte by the K\"ahler-Einstein condition. 
We denote by $\Delta=\Delta_{h_{\alpha\bar\beta}}$ the Laplace-Beltrami operator with respect to the K\"ahler-Einstein metric $h_{\alpha\bar\beta}$ on $D_s$. Schumacher proved that the geodesic curvature $c(H)$ of $H$ satisfies a certain elliptic partial differential equation on each slice. (For the proof, see \cite{Schumacher3} or \cite{Choi}.)

\begin{theorem} [\cite{Schumacher3}]
The following elliptic equation holds slicewise:
\begin{equation} \label{E:PDE}
-\Delta c(H)+(n+1)c(H)
=
\abs{\bar\partial v_H}^2.
\end{equation}
\end{theorem}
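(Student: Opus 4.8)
The plan is to verify \eqref{E:PDE} by a direct slicewise computation: I fix $s\in U$, regard every quantity as living on the single fiber $D_s$ with its K\"ahler--Einstein metric $h_{\alpha\bar\beta}$, and work with the complex Laplacian $\Delta=h^{\bar\beta\alpha}\partial_\alpha\partial_{\bar\beta}$ under which \eqref{E:PDE} is to be read. The two facts that drive everything are, first, that the K\"ahler--Einstein equation written through the potential is simply $\log\det\paren{h_{\alpha\bar\beta}}=(n+1)h$, and second, that the geodesic curvature splits as
\[
c(H)=h_{s\bar s}-h_{s\bar\beta}h^{\bar\beta\alpha}h_{\alpha\bar s}=:h_{s\bar s}-\abs{\bar\partial f}^2,
\]
where I abbreviate $f:=h_s$, so that $f_{\bar\beta}=h_{s\bar\beta}$ and $f_{\alpha\bar\beta}=h_{\alpha\bar\beta s}$. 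I then apply $(-\Delta+(n+1))$ to each of the two pieces $h_{s\bar s}$ and $\abs{\bar\partial f}^2$ and add.

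First I would differentiate the K\"ahler--Einstein equation in the base direction. A single application of $\partial_s$ gives $h^{\bar\beta\alpha}h_{\alpha\bar\beta s}=(n+1)h_s$, i.e. the eigenfunction relation $\Delta f=(n+1)f$ (and, by conjugation, $\Delta\bar f=(n+1)\bar f$). Applying $\partial_{\bar s}$ once more, and using $\partial_{\bar s}h^{\bar\beta\alpha}=-h^{\bar\beta\gamma}h_{\gamma\bar\delta\bar s}h^{\bar\delta\alpha}$, yields
\[
\Delta h_{s\bar s}=(n+1)h_{s\bar s}+h^{\bar\beta\gamma}h^{\bar\delta\alpha}h_{\alpha\bar\beta s}h_{\gamma\bar\delta\bar s},
\]
where the last term is exactly the squared pointwise norm of the mixed $(1,1)$-Hessian $f_{\alpha\bar\beta}$. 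Hence $(-\Delta+(n+1))h_{s\bar s}=-\abs{\nabla\bar\partial f}^2$.

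Next I would compute $\Delta\abs{\bar\partial f}^2$ by a Bochner--Kodaira calculation, most cleanly in K\"ahler normal coordinates at a point. Three ingredients combine there: the curvature term produced when $\Delta$ differentiates the inverse metric is the Ricci tensor, which by the Einstein condition equals $-(n+1)h_{\alpha\bar\beta}$ and so contributes a clean multiple of $\abs{\bar\partial f}^2$; the terms in which $\Delta$ falls on $f$ or on $\bar f$ are handled by $\Delta f=(n+1)f$; and the two remaining second-order terms assemble into $\abs{\nabla\bar\partial f}^2$ (the $(1,1)$-Hessian) and $\abs{\bar\nabla\bar\partial f}^2$ (the $(0,2)$-Hessian). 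The outcome is
\[
\Delta\abs{\bar\partial f}^2=(n+1)\abs{\bar\partial f}^2+\abs{\nabla\bar\partial f}^2+\abs{\bar\nabla\bar\partial f}^2.
\]
Adding the two displayed computations, the $(n+1)\abs{\bar\partial f}^2$ terms cancel within the second one, the $(1,1)$-Hessian term $\abs{\nabla\bar\partial f}^2$ cancels against the quadratic remainder from the double $s$-derivative, and one is left with $-\Delta c(H)+(n+1)c(H)=\abs{\bar\nabla\bar\partial f}^2$. The last task is to recognize this as $\abs{\bar\partial v_H}^2$: writing $v_H=\partial/\partial s-h_{s\bar\beta}h^{\bar\beta\alpha}\partial/\partial z^\alpha=\partial/\partial s+a^\alpha\partial/\partial z^\alpha$, the fiber part $\bar\partial v_H=(\partial_{\bar\gamma}a^\alpha)dz^{\bar\gamma}\otimes\partial/\partial z^\alpha$ has covariant components $-h^{\bar\beta\alpha}\nabla_{\bar\gamma}h_{s\bar\beta}$, i.e. the covariant $(0,2)$-Hessian of $f$ with one index raised, whose squared norm is precisely $\abs{\bar\nabla\bar\partial f}^2$.

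The hard part will be the bookkeeping in the Bochner step: one must keep track of all the second-order terms and check that exactly the Ricci term collapses---via the Einstein equation---to the coefficient $n+1$, while the unwanted $(1,1)$-Hessian contribution cancels against the quadratic remainder left by the double differentiation of the Monge--Amp\`ere equation, leaving only the $(0,2)$-Hessian. A secondary point worth making explicit is that the intermediate quantities $f=h_s$ and $h_{s\bar s}$ are not tensorial on $D_s$---they depend on the base coordinate $s$---whereas $c(H)$ and $\abs{\bar\partial v_H}^2$ are geometric; since each step above is a genuine pointwise identity valid in any holomorphic coordinate system, the non-invariant intermediate terms are legitimate and only the invariant combination survives, which justifies reading \eqref{E:PDE} as an intrinsic elliptic equation on the fiber.
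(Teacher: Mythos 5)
Your computation is correct: differentiating the fiberwise Monge--Amp\`ere equation $\log\det(h_{\alpha\bar\beta})=(n+1)h$ once and twice in the base direction, the Bochner identity for $\abs{\bar\partial f}^2$ with $f=h_s$ (in which the Ricci term collapses via the Einstein condition $R_{\alpha\bar\beta}=-(n+1)h_{\alpha\bar\beta}$), and the identification of the raised covariant $(0,2)$-Hessian of $f$ with $\bar\partial v_H$ do combine exactly as you say to give \eqref{E:PDE}. Be aware, though, that the paper itself contains no proof of this statement --- it is quoted from \cite{Schumacher3}, with \cite{Choi} offered as an alternative reference --- so the comparison is really with Schumacher's argument, of which yours is essentially a reconstruction: Schumacher organizes the same computation around the horizontal lift $v_H$ and the Kodaira--Spencer form $\bar\partial v_H$ from the start, whereas you split $c(H)=h_{s\bar s}-\abs{\bar\partial h_s}^2$ and treat the two pieces separately, which has the merit of making the cancellation of the mixed $(1,1)$-Hessian terms completely explicit. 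The one point that genuinely requires the care you gave it is the normalization of $\Delta$: \eqref{E:PDE} holds with $\Delta$ the complex Laplacian $h^{\bar\beta\alpha}\partial_\alpha\partial_{\bar\beta}$ (the same reading the paper uses in \eqref{E:linear_equation}), not the Riemannian Laplace--Beltrami operator, which would change the constants by a factor of $2$.
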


From now on, we fix a slice $D_s$ and we think the geodesic curvatures $c(W)$ and $c(H)$ as functions on $D_s$. By the hypothesis, every boundary point of $D_s$ is a strongly pseudoconvex boundary point of $D$. It follows that $c(W)\rightarrow\infty$ as $x\rightarrow \partial D_s$ by Remark \ref{R:Levi_form}. The following proposition is describe the boundary behavior of $c(H)$ in terms of $c(W)$.
\begin{proposition}\label{P:ratio_boundary_behavior}
The geodesic curvatures $c(W)$ and $c(H)$ go to infinity near the boundary of the same order. More precisely, we have
\begin{equation}\label{E:boundedness}
\frac{c(H)}{c(W)}(x)\rightarrow1
\;\;\;
\textrm{as}
\;\;\;
x\rightarrow\partial D_s.
\end{equation}
\end{proposition}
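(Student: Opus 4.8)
The plan is to exploit the exact relation $h=w+u$, where $u$ is the solution of the family of Monge--Amp\`ere equations \eqref{E:Monge-Ampere''}, and to show that near $\partial D_s$ the true potential $h$ is so close to the approximate potential $w$ that $c(H)$ and $c(W)$ share the same leading singularity. Since $c(W)\to\infty$ like $\abs\varphi^{-1}$ by Remark \ref{R:Levi_form}, it suffices to prove that the ratio of their leading coefficients tends to $1$. First I would record what is already available: Theorem \ref{T:CY2} and \eqref{E:boundary} give $\abs{u_{\alpha\bar\beta}}=O(\abs\varphi^{n-3/2-b})$ on each slice, and the lower-order derivatives $u_\alpha,u$ decay even faster, so for $n\ge2$ all of these tend to $0$ at $\partial D_s$.

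Writing $e^{-h}=(-\psi)e^{-u}$, I set $\tilde\psi:=\psi e^{-u}$, so that $h=-\log(-\tilde\psi)$ and $\tilde\psi$ is again a defining function of $D_s$. The same algebraic identity that produced the formula for $c(W)$ in Subsection \ref{SS:GC} then yields the exact analogue
\begin{equation*}
c(H)=\frac{1}{-\tilde\psi}\mathcal{L}\tilde\psi(v_H,\ov{v_H})+\frac{1}{\tilde\psi^2}\abs{\partial\tilde\psi(v_H)}^2 .
\end{equation*}
Dividing by the corresponding expression for $c(W)$ and using $\psi/\tilde\psi=e^{u}\to1$, the proposition reduces to two boundary statements: first, that $v_H$ extends up to $\partial D$, is tangent to $\partial D$, and satisfies $v_H-v_W\to0$ there; second, that $\mathcal{L}\tilde\psi(v_H,\ov{v_H})\to\mathcal{L}\psi(v_W,\ov{v_W})$ on $\partial D$. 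The two ``$\partial\psi(v)$'' terms are bounded by tangency and hence negligible against the blowing-up first terms.

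For the first statement I would compare the horizontal coefficients via
\begin{equation*}
h_{s\bar\beta}h^{\bar\beta\alpha}-w_{s\bar\beta}w^{\bar\beta\alpha}
=w_{s\bar\beta}\paren{h^{\bar\beta\alpha}-w^{\bar\beta\alpha}}+u_{s\bar\beta}h^{\bar\beta\alpha}.
\end{equation*}
The first summand is controlled using $w_{s\bar\beta}=O(\abs\varphi^{-2})$, the resolvent identity $h^{\bar\beta\alpha}-w^{\bar\beta\alpha}=-h^{\bar\beta\gamma}u_{\gamma\bar\delta}w^{\bar\delta\alpha}$, and the fact that both inverses are $O(\abs\varphi)$ by Lemma \ref{L:identity1} and \eqref{E:inverse}; this already gives $O(\abs\varphi^{n-3/2-b})\to0$. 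For the second statement, the correction to the Levi form coming from $u$ consists of terms of the shapes $\psi u_{\mu\bar\nu}$, $\psi_\mu u_{\bar\nu}$ and $\psi u_\mu u_{\bar\nu}$ contracted against $v_H$; since $\psi\to0$ and the slice derivatives of $u$ decay, these vanish at the boundary provided the mixed derivatives do not blow up too fast.

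The main obstacle, and the only input not already contained in the excerpt, is the control of the derivatives of $u$ in the base direction: I must show $u_{s\bar\beta},u_{s\bar s}=o(\abs\varphi^{-1})$ and $u_s\to0$, which do not follow from Theorem \ref{T:CY2} since that result is slicewise. I would obtain them by differentiating \eqref{E:Monge-Ampere''} in $s$: the function $u_s$ satisfies a linear elliptic equation whose leading operator is the linearization $\Delta_H-(n+1)$ and whose inhomogeneous term is built from the $s$-derivatives of $w$ and of $F=\xi\varphi^{n+1}$, all of which carry the required boundary decay because $\xi,\varphi,\eta$ are smooth on $\bar D$ in all variables. Feeding this into the Cheng--Yau boundary estimate, applied now to the linear equation, yields the decay of $u_s$ and of its slice derivatives $u_{s\bar\beta}$, and a further differentiation in $\bar s$ controls $u_{s\bar s}$. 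Because the $s$-direction behaves like a tangential direction along the family, the resulting exponents are no worse than the slice ones, so the bounds already hold for $n=2$; substituting them into the two reductions above completes the proof.
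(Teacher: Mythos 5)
Your overall skeleton---writing $h=w+u$, differentiating the family of Monge--Amp\`ere equations \eqref{E:Monge-Ampere''} in $s$ to get a linear elliptic equation for $u_s$, and converting sup-norm decay into derivative bounds via the Cheng--Yau/Schauder machinery---is exactly the paper's strategy, and the estimates you aim for are essentially the ones the paper proves (for $n=2$: $u_s=O(\abs\varphi^{1/2-b'})$ and $u_{s\bar\beta}=O(\abs\varphi^{-1/2-b'})$ for small $b'>0$). The gap is in your final reduction. You split $c(H)=\frac{1}{-\tilde\psi}\mathcal{L}\tilde\psi(v_H,\ov{v_H})+\frac{1}{\tilde\psi^2}\abs{\partial\tilde\psi(v_H)}^2$ with $\tilde\psi=\psi e^{-u}$, and declare the gradient term ``bounded by tangency and hence negligible.'' That is correct for $v_W$, whose tangency is of full order $v_W(\varphi)=O(\abs\varphi)$, and it is correct for $v_H$ when $n\ge3$. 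But for $n=2$---the only case this paper adds over \cite{Choi}---the best tangency estimate the available boundary regularity yields is $v_H(\varphi)=O(\abs\varphi^{1/2-b'})$ (this is exactly \eqref{E:tangentiality}, coming from the dominant contribution $u_{s\bar\beta}\,w^{\bar\beta\alpha}\psi_\alpha=O(\abs\varphi^{-1/2-b'})\cdot O(\abs\varphi)\cdot O(1)$, and it is the reason Theorem \ref{T:triviality} is restricted to $n\ge3$). Hence your gradient term is only $O(\abs\varphi^{-1-2b'})$, which blows up \emph{faster} than $c(W)\sim\abs\varphi^{-1}$, and the ratio cannot be shown to tend to $0$. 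Note also that your own stated targets are too weak for your own reduction: with merely $u_{s\bar\beta}=o(\abs\varphi^{-1})$ the gradient term is only $o(\abs\varphi^{-2})$; what your split actually needs is $u_{s\bar\beta}=o(\abs\varphi^{-1/2})$, equivalently $v_H(\varphi)=o(\abs\varphi^{1/2})$, and this strictly exceeds what differentiation of the equation plus Schauder delivers when $n=2$, since the source term decays only like $Q=O(\abs\varphi^{1/2-b})$.

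The paper's decomposition is engineered precisely to avoid forming this square. It expands $c(H)=c(W)+R_1+R_2$ via Lemma \ref{L:identity2}: in $R_1$ each singular factor $u_{s\bar\beta}$ is contracted either against $w^{\bar\beta\alpha}w_{\alpha\bar s}$ (a combination that is smooth up to the boundary, by the same computation as for $g_{s\bar\beta}g^{\bar\beta\alpha}$ in Subsection \ref{SS:GC}), or against another copy of itself but with the taming factor $w^{\bar\beta\alpha}=O(\abs\varphi)$ in the middle, giving $R_1=O(\abs\varphi^{-1/2-b'})$ and $R_2=O(1)$, so $(R_1+R_2)/c(W)\to0$ even for $n=2$. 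In your split, the two copies of $u_{s\bar\beta}$ are instead contracted through $\psi_\alpha\ov{\psi_\gamma}/\psi^2\sim\abs\varphi^{-2}$, which amplifies rather than tames the square. Unless you identify a cancellation between your gradient term and the Levi-form term (which would in effect reproduce the paper's computation), the argument does not close in dimension $2$, which is the case the proposition is really about.
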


In the next subsection, we shall prove Proposition \ref{P:ratio_boundary_behavior}. In a moment, assuming that, we want to complete the proof.
\medskip

From \eqref{E:boundedness} we know that $c(H)$ is bounded from below. Then we can apply the almost maximum principle due to Yau (\cite{Yau}), namely, there exists a sequence $\{x_k\}_{k\in\NN}\subset D_s$ such that
\begin{align*}
&\lim_{k\rightarrow\infty}\nabla c(H)(x_k)=0,\;\;
\liminf_{k\rightarrow\infty}\Delta c(H)(x_k)\ge0,\;\;
\text{and}\\
&
\hspace{1.5cm}
\lim_{k\rightarrow\infty}c(H)(x_k)
=\inf_{x\in D_s}c(H)(x).
\end{align*}
It follows that
\begin{equation*}
(n+1)c(H)(x_k,y)
=
\abs{\bar\partial{v}_H}^2+\Delta c(H)(x_k,y)>0.
\end{equation*}
Taking $k\rightarrow\infty$, we have $c(H)\ge0$.

We also know that $c(H)\rightarrow\infty$ as $x\rightarrow\partial D_s$ by \eqref{E:boundedness}. But this prevents the function $c(H)$ from being zero. In fact, according to a theorem of Kazdan and De Turck (\cite{DeTurck_Kazdan}), K\"{a}hler-Einstein metrics are real analytic on holomorphic coordinates, and by the Implicit Function Theorem, depend in a real-analytic way upon holomorphic parameters. This also applies to the function $c(H)$.

\begin{proposition} \label{P:uniqueness}
Let $\omega$ be a K\"ahler form in $\CC^n$. Let $f$ and $g$ be non-negative smooth functions on $U\subset\CC^n$. Suppose
\begin{equation*}
-\Delta_\omega f+Cf
=g
\end{equation*}
holds for some positive constant $C$. If $f(0)=0$, then $f$ and $g$ vanish identically in a neighborhood of $0\in\CC^n$.
\end{proposition}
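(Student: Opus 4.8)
The plan is to read the identity $-\Delta_\omega f + Cf = g$ as the assertion that $f$ is a \emph{supersolution} of a uniformly elliptic operator whose zeroth-order coefficient is non-positive, and then to invoke Hopf's strong maximum principle. The whole statement reduces to a careful pairing of two signs.

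First I would observe that, since $f \ge 0$ and $f(0)=0$, the origin is an interior minimum of $f$ and the minimum value is exactly $0$. I would then fix a small ball $B \subset U$ on which the K\"ahler metric $\omega$ has bounded and uniformly positive-definite coefficients. In the standard coordinates the Laplace--Beltrami operator has the local form $\Delta_\omega u = g^{\alpha\bar\beta}u_{\alpha\bar\beta} + (\text{first order})$, which, on $B$, is a real second-order uniformly elliptic operator with smooth coefficients. (Here $\Delta_\omega$ is normalized with the same sign convention as in the almost-maximum-principle step above, i.e. the one for which an interior minimum forces $\Delta_\omega f \ge 0$.)

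Next I would rewrite the equation as $\mathcal{L} f := \Delta_\omega f - C f = -g$. Because $g \ge 0$, this gives $\mathcal{L} f \le 0$, so $f$ is a supersolution of $\mathcal{L}$; and the zeroth-order coefficient of $\mathcal{L}$ equals $-C \le 0$ since $C>0$. These are precisely the hypotheses of the minimum form of the strong maximum principle (for instance Gilbarg--Trudinger, Theorem 3.5): a supersolution of such an operator that attains a non-positive minimum at an interior point must be constant on the connected component. Since $f$ attains its minimum value $0$ at the interior point $0$, I conclude $f \equiv 0$ on $B$, and feeding this back into the equation yields $g = -\Delta_\omega f + Cf \equiv 0$ on $B$ as well.

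I expect no serious obstacle; the only place the hypotheses are genuinely used is the coincidence of two signs, and the ``hard part'' is merely to isolate it cleanly. The zeroth-order term must be non-positive, which is guaranteed by $C>0$, and $g$ must carry the sign that makes $f$ a supersolution rather than a subsolution, which is guaranteed by $g \ge 0$; were either sign reversed, the strong maximum principle would fail and so would the conclusion. The remaining bookkeeping -- uniform ellipticity of $\Delta_\omega$ on the fixed ball $B$ and consistency of the Laplacian's sign convention -- is routine. Note that no real-analyticity is needed for this local statement; analyticity will be invoked only afterwards, to propagate the local vanishing of $c(H)$ to a global statement on the slice $D_s$.
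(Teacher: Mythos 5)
Your proof is correct and follows essentially the same route as the paper: both reduce the statement to Hopf's strong maximum principle applied at the interior minimum $0$, viewing $f$ as a supersolution of a uniformly elliptic operator with non-positive zeroth-order coefficient (the paper exhibits ellipticity via normal coordinates and cites Protter--Weinberger, you cite Gilbarg--Trudinger Theorem 3.5). If anything, your write-up is cleaner than the paper's, whose proof text contains vestigial references to an undefined function $\psi$ and a nonexistent condition (3).
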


\begin{proof}
It follows from the assumption that $\psi$ has a local minimum at the origin, and (3) implies that $\Delta_{\omega_U}\psi(0)=0$ and $f(0)=0$.

We set $\Delta=\Delta_{\omega_U}$ and choose normal coordinates $z^\alpha$ of the second kind for $\omega_U$ at $0$. Let $\Delta_0=\sum_{\alpha=1}^n\pd{^2}{z^\alpha\partial{z}^{\bar\alpha}}$ be the standard Laplacian so that
\begin{equation*}
-\Delta=-\Delta_0+t^{\bar\beta\alpha}\pd{^2}{z^\alpha\partial{z}^{\bar\beta}}
\end{equation*}
where the power series expansion of all $t^{\bar\beta\alpha}$ have no terms of order zero or one. Then the maximum principle of E. Hopf implies that $\psi\equiv0$. (cf. See Theorem 6, Chap. 2, Sect. 3 in \cite{Protter_Weinberger}.)
\end{proof}

The real analyticity of $c(H)$ and Proposition \ref{P:uniqueness} say that $c(H)$ is either identically zero, or never zero. However we know that $c(W)(x)\rightarrow\infty$ as $x\rightarrow\partial D_s$. This completes the proof.

\subsection{The boundary behavior of $c(H)$} \label{SS:bb}
In this subsection, we shall prove Proposition \ref{P:ratio_boundary_behavior}.
\medskip

Recall that Remark \ref{R:Levi_form} says that $c(W)$ is given by the following:
\begin{equation*}
c(W)
=
\frac{1}{-\psi}\mathcal{L}\psi(v_W,\overline{v_W})
+\frac{1}{\psi^2}\abs{\partial\psi(v_W)}^2.
\end{equation*}
Since every boundary point of $D_s$ is a strongly pseudoconvex boundary point of $D$ and $v_W$ is tangent to $\partial D$, we have
\begin{equation*}
c(W)\ge C\cdot\abs\psi
\end{equation*}
for some constant $C>0$ when a point goes to $\partial D_s$, in particular $c(W)$ blows up of order greater than or equal to $1$. To compute $c(H)$ in terms of $c(W)$, we need the following lemma.

\begin{lemma}[\cite{Choi}]\label{L:identity2}
For each $s\in U$, there exists a hermitian $n\times{n}$ matrix 
\begin{equation*}
N^s
=
(N^s_{\alpha\bar\beta})
\in
\mathrm{Mat}_{n\times{n}}\paren{C^\infty(D_s)\cap{C}^{n-3/2-b}(\bar D_s)}
\end{equation*}
with $\norm{N^s}=O(\abs{\varphi(\cdot,s)}^{n-3/2-b})$ for $b>0$, which satisfies that
\begin{equation*}
h^{\bar\beta\alpha}(\cdot,s)-w^{\bar\beta\alpha}(\cdot,s)
=
w^{\bar\beta\gamma}(\cdot,s)N^s_{\gamma\bar\delta}w^{\bar\delta\alpha}(\cdot,s).
\end{equation*}
In particular, $h^{\bar\beta\alpha}(\cdot,s)\in{C}^\infty(D_s)\cap{C}^{n-3/2-b}(\bar D_s)$ for $b>0$ and $h^{\bar\beta\alpha}(\cdot,s)=O(\abs{\varphi(\cdot,s)})$.
\end{lemma}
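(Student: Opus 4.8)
The plan is to treat this as the slicewise analogue of Lemma \ref{L:identity1}, with the bounded correction $(\log\eta)_{\alpha\bar\beta}$ there replaced by the Monge-Amp\`ere correction $u_{\alpha\bar\beta}$, whose boundary decay now plays the role of the small quantity. I would fix $s\in U$, work entirely on the slice $D_s$, and write $W=(w_{\alpha\bar\beta})$, $H=(h_{\alpha\bar\beta})$, $U=(u_{\alpha\bar\beta})$ for the corresponding Hermitian matrices. By the Monge-Amp\`ere relation \eqref{E:Monge-Ampere''} we have $H=W+U$, and by Theorem \ref{T:CY2} (via \eqref{E:boundary}) the correction satisfies $\norm{U}=O(|\varphi(\cdot,s)|^{n-3/2-b})$ together with $U\in C^\infty(D_s)\cap C^{n-3/2-b}(\bar D_s)$.

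The algebraic heart is to factor $H=W(I+W^{-1}U)$ and expand the inverse. First I would invoke Lemma \ref{L:identity1} to get $\norm{W^{-1}}=O(|\varphi(\cdot,s)|)$, whence $\norm{W^{-1}U}=O(|\varphi(\cdot,s)|^{n-1/2-b})$; since $n\ge 2$ this tends to $0$ at $\partial D_s$, so near the boundary the Neumann series for $(I+W^{-1}U)^{-1}$ converges and
$H^{-1}-W^{-1}=W^{-1}\Big(\sum_{k\ge 1}(-1)^k(UW^{-1})^{k-1}U\Big)W^{-1}=:W^{-1}\,N\,W^{-1}$, i.e. $N=-U(I+W^{-1}U)^{-1}$. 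Unwinding the matrix product in index notation reproduces exactly the asserted identity $h^{\bar\beta\alpha}-w^{\bar\beta\alpha}=w^{\bar\beta\gamma}N_{\gamma\bar\delta}w^{\bar\delta\alpha}$. To see that $N$ is Hermitian I would not argue from the series but from the equivalent closed form $N=W(H^{-1}-W^{-1})W$: both $W$ and $H^{-1}-W^{-1}$ are Hermitian, so $N$ is.

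For the norm bound, the leading term of the series is $-U$ and each subsequent term carries an extra factor $W^{-1}U=O(|\varphi|^{n-1/2-b})$, so $\norm{N}=\norm{U}+O(|\varphi|^{2n-2-2b})=O(|\varphi(\cdot,s)|^{n-3/2-b})$. For regularity, $w^{\bar\beta\alpha}$ is smooth up to $\partial D$ by Lemma \ref{L:identity1} while $U\in C^{n-3/2-b}(\bar D_s)$, so each partial sum lies in $C^\infty(D_s)\cap C^{n-3/2-b}(\bar D_s)$; since $\norm{W^{-1}U}$ is uniformly small near the boundary, the geometric series converges in the $C^{n-3/2-b}(\bar D_s)$ norm and $N$ inherits this regularity. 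Finally I would substitute $\norm{N}=O(|\varphi|^{n-3/2-b})$ and $w^{\bar\beta\alpha}=O(|\varphi|)$ into $h^{\bar\beta\alpha}=w^{\bar\beta\alpha}+w^{\bar\beta\gamma}N_{\gamma\bar\delta}w^{\bar\delta\alpha}$, obtaining $h^{\bar\beta\alpha}=O(|\varphi|)+O(|\varphi|^{n+1/2-b})=O(|\varphi(\cdot,s)|)$ and $h^{\bar\beta\alpha}\in C^\infty(D_s)\cap C^{n-3/2-b}(\bar D_s)$, which is the ``in particular'' clause.

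I expect the main obstacle to be the regularity and uniformity bookkeeping rather than the algebra. A crude operator-norm estimate on $H^{-1}-W^{-1}$ is worthless because $\norm{W}\sim|\varphi|^{-2}$ blows up at the boundary; the entire purpose of the factored form $W^{-1}NW^{-1}$ is to isolate the two singular factors $W^{-1}$ and leave a genuinely small, boundary-regular core $N$. The delicate point will be verifying that the Neumann series converges not just pointwise but in the H\"older norm $C^{n-3/2-b}(\bar D_s)$ up to the boundary, since that is what legitimizes term-by-term differentiation and hence the claimed regularity of both $N$ and $h^{\bar\beta\alpha}$.
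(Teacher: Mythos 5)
The paper itself contains no proof of this lemma: it is quoted from \cite{Choi} (the author's earlier Math.\ Ann.\ paper), so your argument can only be measured against the natural reconstruction, and yours is essentially that reconstruction, with the core steps correct. By uniqueness of the Cheng--Yau metric, $h_{\alpha\bar\beta}=w_{\alpha\bar\beta}+u_{\alpha\bar\beta}$, i.e.\ $H=W+U$ slicewise; Lemma \ref{L:identity1} gives that $W^{-1}$ is smooth up to the boundary with $\norm{W^{-1}}=O(\abs\varphi)$; and \eqref{E:boundary} gives $\norm{U}=O(\abs{\varphi}^{n-3/2-b})$ with $U\in C^\infty(D_s)\cap C^{n-3/2-b}(\bar D_s)$. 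Your closed form $N=W\paren{H^{-1}-W^{-1}}W=-U\paren{I+W^{-1}U}^{-1}=-UH^{-1}W$ is globally defined on $D_s$ (since $I+W^{-1}U=W^{-1}H$ is invertible everywhere, not only where the Neumann series converges), is manifestly hermitian, reproduces the asserted factorization $h^{\bar\beta\alpha}-w^{\bar\beta\alpha}=w^{\bar\beta\gamma}N_{\gamma\bar\delta}w^{\bar\delta\alpha}$, and your decay estimate for $\norm{N}$ and the ``in particular'' clause both check out.

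The one step that does not hold as written is the claim that the Neumann series converges in the $C^{n-3/2-b}(\bar D_s)$ norm ``since $\norm{W^{-1}U}$ is uniformly small near the boundary'': smallness in sup norm does not control H\"older seminorms, so the $C^{n-3/2-b}$-norm of $W^{-1}U$ on a collar need not be small, and term-by-term summation in that norm is not justified. The repair is standard and avoids the series entirely: near $\partial D_s$ write $\paren{I+W^{-1}U}^{-1}=\det\paren{I+W^{-1}U}^{-1}\mathrm{adj}\paren{I+W^{-1}U}$; the adjugate and the determinant are polynomials in the entries of $W^{-1}U$, hence lie in $C^{n-3/2-b}$ up to the boundary, and the determinant tends to $1$ at $\partial D_s$, so it is bounded below on a collar, and a quotient by a $C^{n-3/2-b}$ function bounded away from zero stays in $C^{n-3/2-b}$. (Alternatively, one can invoke that in the matrix algebra over the Banach algebra $C^{k,\gamma}$ the spectral radius of a matrix-valued function is the supremum of its pointwise spectral radii, which is exactly what your sup-norm bound controls.) With that substitution, every assertion of the lemma follows as you outline.
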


By Lemma \ref{L:identity2}, $c(H)$ is computed as follows:
\begin{align*}
c(H)
&=
h_{s\bar{s}}-h_{s\bar\beta}h^{\bar\beta\alpha}h_{\alpha\bar{s}} \\	
&=
h_{s\bar{s}}-h_{s\bar\beta}
\paren{w^{\bar\beta\alpha}
+
w^{\bar\beta\gamma}N_{\gamma\bar\delta}w^{\bar\delta\alpha}
}
h_{\alpha\bar{s}} \\
&=
w_{s\bar{s}}+u_{s\bar{s}}
-
\paren{w_{s\bar\beta}+u_{s\bar\beta}}\paren{w^{\bar\beta\alpha}
+
w^{\bar\beta\gamma}N_{\gamma\bar\delta}w^{\bar\delta\alpha}
}
\paren{w_{\alpha\bar{s}}+u_{\alpha\bar{s}}} \\
&=
c(W) + (\text{remaining terms}),
\end{align*}
where the remaining terms are given by the sum of 
\begin{equation*}
R_1:=
	u_{s\bar{s}}+\paren{w_{s\bar\beta}w^{\bar\beta\alpha}u_{\alpha\bar{s}}
	+u_{s\bar\beta}w^{\bar\beta\alpha}w_{\alpha\bar{s}}
	+u_{s\bar\beta}w^{\bar\beta\alpha}u_{\alpha\bar{s}}
	}
\end{equation*}
and
\begin{equation*}
R_2
	:=\paren{w_{s\bar\beta}+u_{s\bar\beta}}
	w^{\bar\beta\gamma}N_{\gamma\bar\delta}w^{\bar\delta\alpha}
	\paren{w_{\alpha\bar{s}}+u_{\alpha\bar{s}}}.
\end{equation*}
Hence it is enough to show that 
\begin{equation*}
\frac{R_1+R_2}{c(W)}\rightarrow0
\;\;\;
\textrm{as}
\;\;\;
x\rightarrow \partial D_s.
\end{equation*}
First we note that $u_{s\bar s}$ is bounded by Section 3 in \cite{Choi}. By taking logarithm of \eqref{E:Monge-Ampere''} and differentiating it with respect to $s$, we know that $u_s$ satisfies the following linear elliptic partial differential equation on each slice $D_s$:
\begin{equation} \label{E:linear_equation}
-\Delta u_s+(n+1)u_s=Q,
\end{equation}
where $Q=-F_s+\paren{\Delta-\Delta_{w_{\alpha\bar\beta}}}w_s$. Here $\Delta_{w_{\alpha\bar\beta}}$ is the Laplace-Beltrami operator with respect to the K\"ahler metic $w_{\alpha\bar\beta}$. Note that the boundary behavior of the solution of complex Monge-Amp\`ere equation implies that
\begin{equation}\label{E:estimate_R}
Q=O(\abs{\varphi}^{n-3/2-b})
\end{equation}
for $b>0$. We need the following lemma.

\begin{proposition} \label{P:second}
Let $0<r\le 1$. If $Q=O(\abs\varphi^r)$, then $\abs{u_s}=O(\abs\varphi^r)$.
\end{proposition}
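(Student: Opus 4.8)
The plan is to treat \eqref{E:linear_equation} as an inhomogeneous linear PDE $-\Delta u_s + (n+1)u_s = Q$ on the slice $D_s$ and to use the comparison principle for the operator $-\Delta + (n+1)$, which is a second order elliptic operator with a positive zeroth-order coefficient. The key idea is that a positive zeroth-order term allows one to compare $u_s$ against a suitably chosen barrier (supersolution) built out of a power of the defining function. Concretely, since $Q = O(\abs\varphi^r)$, I want to find a constant $A>0$ so that $A\abs\varphi^r$ (or more precisely $A(-\psi)^r$, using the defining function $\psi=\eta\varphi$ adapted to the complete metric $w_{\alpha\bar\beta}$) is a supersolution: $-\Delta(A\abs\varphi^r) + (n+1)(A\abs\varphi^r) \ge \abs Q$, and symmetrically $-A\abs\varphi^r$ is a subsolution. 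Then the maximum principle yields $\abs{u_s}\le A\abs\varphi^r$, which is exactly $\abs{u_s}=O(\abs\varphi^r)$.

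\medskip

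First I would record the behavior of the Laplace–Beltrami operator $\Delta=\Delta_{h_{\alpha\bar\beta}}$ (or $\Delta_{w_{\alpha\bar\beta}}$, whichever drives \eqref{E:linear_equation}) acting on a radial barrier $\beta:=(-\psi)^r$. Since the K\"ahler–Einstein metric is complete and comparable to $g_{\alpha\bar\beta}=\im\ddbar(-\log(-\varphi))$ near $\partial D_s$, the relevant quantities can be estimated using the formula for $g^{\bar\beta\alpha}$ in \eqref{E:inverse} and the bound $g^{\alpha\bar\beta}g_\alpha g_{\bar\beta}\le 1$. Computing $\Delta\beta = h^{\bar\beta\alpha}\beta_{\alpha\bar\beta}$, the dominant contribution comes from $\beta_{\alpha\bar\beta}\sim r(r-1)(-\psi)^{r-2}\psi_\alpha\psi_{\bar\beta} + r(-\psi)^{r-1}(-\psi_{\alpha\bar\beta})$; pairing against $h^{\bar\beta\alpha}=O(\abs\varphi)$ (Lemma \ref{L:identity2}) and using completeness, one finds $\Delta\beta = O(\abs\varphi^r)$ with a sign that can be controlled. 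The hypothesis $0<r\le 1$ is exactly what makes the zeroth-order term $(n+1)\beta=(n+1)(-\psi)^r$ dominate (or at least match) $\Delta\beta$ in the supersolution inequality; this is why the statement restricts to $r\le1$.

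\medskip

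The main obstacle, which I would address next, is twofold. First, the comparison argument requires a clean maximum principle on the noncompact complete manifold $D_s$: the barrier $\beta=(-\psi)^r$ vanishes on $\partial D_s$ while $u_s$ is bounded (indeed $u_s\to 0$ at the boundary by the Cheng–Yau boundary estimates recorded before \eqref{E:boundary}), so one must either use Yau's almost-maximum-principle (already invoked in the proof of Theorem \ref{T:main_theorem'}) or a standard comparison on exhausting relatively compact subdomains together with the boundary vanishing of $u_s-\beta$. Getting the boundary behavior of $u_s$ precise enough to apply comparison is the delicate point, since $u_s$ is only known to lie in $C^{n-3/2-b}(\bar D_s)$. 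Second, the barrier estimate for $\Delta\beta$ must be uniform near the entire boundary $\partial D_s$, which needs the strong pseudoconvexity and the smoothness of $\eta$ up to the boundary to guarantee that $\psi_{\alpha\bar\beta}$ and $\abs{\partial\psi}^2$ behave like positive definite, bounded quantities near $\partial D_s$. Once the supersolution inequality $-\Delta\beta+(n+1)\beta\ge c\abs\varphi^r\ge \abs Q/A$ holds for large $A$ uniformly near the boundary, the comparison principle applied to $u_s\mp A\beta$ closes the argument, yielding $\abs{u_s}=O(\abs\varphi^r)$.

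\medskip

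As a technical remark, I expect that one can avoid delicate completeness issues by working first on $D_s\setminus K$ for a compact $K$, where the barrier inequality is valid, and noting that on $K$ the bound $\abs{u_s}=O(\abs\varphi^r)$ is automatic since $u_s$ is smooth and $\abs\varphi$ is bounded below on $K$. Patching the interior bound on $K$ with the boundary comparison on $D_s\setminus K$ then gives the global estimate. This localization is essentially what reduces the problem to verifying a single barrier inequality in a boundary collar, and I would present the proof in that form.
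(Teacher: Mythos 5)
Your proposal is correct and follows essentially the same route as the paper: the paper also compares $u_s$ against the barriers $\pm c(-\varphi)^r$, uses that for $0<r\le 1$ both Hessian terms of the barrier, $-cr(r-1)(-\varphi)^{r-2}h^{\alpha\bar\beta}\varphi_\alpha\varphi_{\bar\beta}$ and $cr(-\varphi)^{r-1}h^{\alpha\bar\beta}\varphi_{\alpha\bar\beta}$, have a favorable sign by strict plurisubharmonicity of $\varphi$, and then applies Yau's almost maximum principle to the bounded function $u_s\mp c(-\varphi)^r$ (boundedness of $u_s$ being quoted from \cite{Choi}). The only inessential differences are that the paper never needs your localization near the boundary or the estimate $\Delta\beta=O(\abs{\varphi}^r)$, since dropping the positively-signed barrier terms makes the differential inequality hold globally on $D_s$.
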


\begin{proof}
In case of $r=1$, it is proved in \cite{Choi}. Thus we may assume that $0<r<1$. 
For $c>0$, we compute
\begin{align*}
\Delta (u_s-c(-\varphi)^r)
&=
h^{\alpha\bar\beta} 
(u_s-c(-\varphi)^r)_{\alpha\bar\beta} \\
&= 
h^{\alpha\bar\beta}(u_s)_{\alpha\bar\beta}
-
h^{\alpha\bar\beta}(c(-\varphi)^r)_{\alpha\bar\beta} \\
&=
(n+1)(u_s)-Q
-h^{\alpha\bar\beta}\paren{(cr(-\varphi)^{r-1})(-\varphi)_\alpha}_{\bar\beta} \\
&=
(n+1)(u_s)-Q
-cr(r-1)(-\varphi)^{r-2}h^{\alpha\bar\beta}(-\varphi)_\alpha(-\varphi)_{\bar\beta}\\
&
\;\;\;\;+cr(-\varphi)^{r-1}h^{\alpha\bar\beta}\varphi_{\alpha\bar\beta}.
\end{align*}
Since $h^{\alpha\bar\beta}$ is positive definite and $\varphi$ is plurisubharmonic, we know that  
\begin{equation*}
h^{\alpha\bar\beta}(-\varphi)_\alpha(-\varphi)_{\bar\beta}>0
\quad\text{and}\quad
h^{\alpha\bar\beta}\varphi_{\alpha\bar\beta}>0.
\end{equation*}
It follows that 
\begin{equation*}
-cr(r-1)(-\varphi)^{r-2}h^{\alpha\bar\beta}(-\varphi)_\alpha(-\varphi)_{\bar\beta}+cr(-\varphi)^{r-1}h^{\alpha\bar\beta}\varphi_{\alpha\bar\beta}>0.
\end{equation*}
So we have
\begin{equation*}
\Delta(u_s-c(-\varphi)^r)
\ge
(n+1)(u_s)-Q.
\end{equation*}
Since $Q=O(\abs\varphi^r)$, we can choose $c_1>0$ such that
\begin{align*}
\Delta(u_s-c_1(-\varphi)^r)
\ge
(n+1)(u_s-c_1(-\varphi)^r).
\end{align*}
Note that $u_s$ is bounded by Section 3 in \cite{Choi}. The almost maximum principle of Yau (\cite{Yau}) implies that $u_s-c_1(-\varphi)^r\le0$ , i.e., $u\le c_1(-\varphi)^r$ in $D_s$.

If we apply the same argument to $\Delta(u_s+c(-\varphi)^r)$ for $c>0$, then we have that $u_s\ge -c_2(\abs\varphi^r)$ for some constant $c_2>0$.  Therefore $u_s=O(\abs\varphi^r)$ as desired.
\end{proof}

Let $(V,(v^1,\dots,v^n))$ be a coordinate system in $\Omega$ satisfying the conditions in Definition 1.1 in \cite{Cheng_Yau}. (cf, see \cite{Choi}.) Note that every bounded smooth strongly pseudoconvex domain in $\CC^n$ admits a open covering of such coordinates. (This is constructed in Section 1 in \cite{Cheng_Yau}). For a smooth function $f$, we write
\begin{equation*}
\begin{aligned}
\abs{f}_{k+\varepsilon,V}
	& =\sup_{z\in{V}}\paren{
	\sum_{\abs{\alpha}+\abs{\beta}\le{k}}\abs{\pd{^{\abs{\alpha}+\abs{\beta}}}
	{v^\alpha\partial{v}^{\bar\beta}}f(z)}
	} \\ 
	&\;\;\; + \sup_{z,z'\in{V}}\paren{
	\sum_{\abs\alpha+\abs\beta={k}}\abs{z-z'}^{-\varepsilon}\abs{\pd{^{\abs\alpha+\abs\beta}}
	{v^\alpha\partial{v}^{\bar\beta}}f(z) - \pd{^{\abs\alpha+\abs\beta}}{v^\alpha\partial{v}^{\bar\beta}}f(z')}
	},
\end{aligned}
\end{equation*}
where $k$ is non-negative integer and $\varepsilon\in(0,1)$. 

Applying the Schauder estimates to Equation \eqref{E:linear_equation} in the coordinate system $(V,(v^1,\dots,v^n))$, we obtain that
\begin{equation} \label{E:Schauder}
\abs{u_s}^*_{2+\varepsilon,V}
\le
{C}\paren{\abs{u_s}_{0,V}
+
\abs{Q}^{(2)}_{0+\varepsilon,V}}.
\end{equation}
(For detailed notations, we refer to see \cite{Gilbarg_Trudinger}.) Instead of introducing the definitions of $\abs\cdot^*_{k+\varepsilon,V}$ and $\abs\cdot^{(2)}_{k+\varepsilon,V}$, we note that the construction of the coordinate system $(V,(v^1,\dots,v^n))$ implies that there exists a open subset $V'$ of $V$ and a uniform constant $C>0$ such that
\begin{align*}
\frac{1}{C}\abs{f}_{k+\varepsilon,V'}
&<
\abs{f}^*_{k+\varepsilon,V'}
<
C\abs{f}_{k+\varepsilon,V'}
\quad\text{and}\quad 
\\
\frac{1}{C}\abs{f}_{k+\varepsilon,V'}
&<
\abs{f}^{(2)}_{k+\varepsilon,V'}
<
C\abs{f}_{k+\varepsilon,V'}.
\end{align*}
By \eqref{E:estimate_R}, if $\varepsilon>0$ is sufficiently small, then
$$
\abs{Q}_{0+\varepsilon,V}=O(\abs\varphi^{n-3/2-b'}).
$$
for some $b'$ with $0<b'<1/2$. 
This together with Proposition \ref{P:second} implies that
\begin{equation}\label{E:bdry}
\abs{u_s}_{0,V}=
\begin{cases}
O(\abs\varphi)\;\;\;\text{if}\;\;\;n\ge3, \\
O(\abs\varphi^{1/2-b'})\;\;\;\text{if}\;\;\;n=2. \\
\end{cases}
\end{equation}
It follows that $\abs{u_s}_{2+\varepsilon,V'}=O(\abs\varphi^{1/2-b'})$. Hence we have 
\begin{equation*}
\abs{u_s}_{2+\varepsilon,V'}<C.
\end{equation*}
for some constant $C>0$. In particular, we have $\abs{u_s}_{1,V'}<\infty$. By the construction of coordinate system $(V,(v^1,\dots,v^n))$ on a bounded smooth strongly pseudoconvex domain (see Section 1 in \cite{Cheng_Yau}), we know that
\begin{equation*}
\sup_{V'}\abs{\sum_\alpha\pd{u_s}{z^\alpha}}
\le
\frac{C}{\abs\varphi}\abs{u_s}_{1,V'},
\end{equation*}
for some uniform constant $C>0$. Hence we have
\begin{equation*}
\sup_{V'}\abs{\sum_\alpha\pd{u_s}{z^\alpha}}
\le
C(\abs\varphi^{-1/2-b'}).
\end{equation*}
Together with Lemma \ref{L:identity1} and Lemma \ref{L:identity2}, it follows that $R_2$ is bounded. Moreover, it also implies that
\begin{equation*}
\frac{R_1}{c(W)}(x)\rightarrow0
\quad\text{as}\quad
x\rightarrow\partial D_s.
\end{equation*}
Hence we have
\begin{equation*}
\frac{c(H)}{c(W)}(x)\rightarrow1
\;\;\;
\textrm{as}
\;\;\;
x\rightarrow \partial D_s.
\end{equation*}
This completes the proof of Proposition~\ref{P:ratio_boundary_behavior}.

\section{Proof of Corollary \ref{C:subharmonicity}}
In this section, we discuss about the variations of K\"ahler-Einstein metrics on bounded pseudoconvex domains. First we discuss about the construction of the K\"ahler-Einstein metric on a bounded pseudoconvex domain. And we prove Corollary \ref{C:subharmonicity} in the next subsection.

\subsection{K\"ahler-Einstein metric on a bounded pseudoconvex domain}
Let $\Omega$ be a bounded pseudoconvex domain.
Then there exists a smooth strictly plurisubharmonic exhaustion function $\psi$. For $N\in\NN$, we denote by
\begin{equation*}
\Omega^N
=
\{z\in D:\psi(z)<N\}.
\end{equation*}
By Sard theorem, we may assume that $\Omega^N$ is a bounded strongly pseudoconvex domain with smooth boundary. It is also obvious that $\{\Omega^N\}$ is a increasing union of $\Omega$. Then the theorem of Cheng and Yau implies that  there exists a unique complete K\"ahler-Einstein metric $h^N_{\alpha\bar\beta}$ on $\Omega^N$ with Ricci curvature $-(n+1)$. By the Schwarz lemma for volume form due to Mok and Yau (\cite{Mok_Yau}), we have that $\{\det(h^N_{\alpha\bar\beta})\}$ is a decreasing sequence, more precisely,
\begin{equation*}
\det(h^N_{\alpha\bar\beta})
\ge
\det(h^{N'}_{\alpha\bar\beta})
\;\;\;\text{for}\;\;\;
N<N'.
\end{equation*}
From the K\"ahler-Einstein condition, $\log\det(h^N_{\alpha\bar\beta})$ is a strictly plurisubharmonic function on $\Omega^N$. It follows that $\left\{\log\det(h^N_{\alpha\bar\beta})\right\}_{N\in\NN}$ is a decreasing sequence of plurisubharmonic functions. This implies that the sequence converges to a plurisubharmonic function $h$. It is proved that $h_{\alpha\bar\beta}$ is the unique complete K\"ahler-Einstein metric with Ricci curvature $-(n+1)$ by Cheng-Yau and Mok-Yau.

\subsection{Plurisubharmonicity of the variations}

Let $D$ be a bounded pseudoconvex domain in $\CC^{n+1}$ such that every slice $D_s$ is a bounded pseudoconvex domain. By the theorem of Mok and Yau, there exists a unique complete K\"ahler-Einstein metric $h_{\alpha\bar\beta}(z,s)$ with Ricci curvature $-(n+1)$. If we define the function $h:D\rightarrow\RR$ by
\begin{equation*}
h(z,s)
=
\frac{1}{n+1}\log\det\paren{h_{\gamma\bar\delta}(z,s)}_{1\le\gamma,\delta\le{n}}
\end{equation*}
then $h$ is strictly plurisubharmonic on each slice $D_s$. Since $D$ is a pseudoconvex domain, there exists a strictly plurisubharmonic exhaustion function $\psi$ on $D$. Let 
$D^N=\{(z,s)\in\CC^{n+1}:\psi(z,s)<N\}$ for $N\in\NN$. Then we have the following:
\begin{itemize}
\item $D^N\subset\subset D$ and $D$ is increasing union of $\{D^N\}$,
\item each $D^N$ is a bounded smooth strongly pseudoconvex subdomain in $D$.
\end{itemize}
Denote by $D_s^N=D^N\cap D_s$. Then there exists a unique complete  K\"ahler-Einstein metric $h^N_{\alpha\bar\beta}(z,s)$ on each $D_s^N$. Define a function $h^N:D^N\rightarrow\RR$ by 
\begin{equation*}
h^N(z,s)
=
\frac{1}{n+1}\log\det\paren{h^N_{\gamma\bar\delta}(z,s)}_{1\le\gamma,\delta\le{n}}
\end{equation*}
for every $N\in\NN$. Then we know that $h^N$ is a smooth strictly plurisubharmonic function on $D^N$ by Section 3 (cf, see \cite{Choi}). On each slice $D_s$, $h^N(\cdot,s)$ forms a decreasing sequence which converges to $h(\cdot,s)$. It follows that the sequence $\{h^N\}$ is a decreasing sequence which converges to $h$ on $D$. This implies that $h$ is limit of a decreasing sequence of plurisubharmonic functions, in particular $h$ is plurisubharmonic.

\section{Local trivility}
In this section, we discuss about the local triviality of a family of smooth bounded strongly pseudoconvex domains. 

Let $D$ be a smooth domain in $\CC^{n+1}$ such that every slice $D_s$ is a bounded strongly pseudoconvex domain with smooth boundary. Since the computation is local, we may assume that $\pi(D)=U$ the standard unit disc in $\CC$. Suppose that the geodesic curvature $c(H)$ of $H$ vanishes in $D$. Then \eqref{E:PDE} implies that $\abs{\bar\partial v_H}$ vanishes, i.e., $v_H$ is a holomorphic vector field on $D$. Thus we have a holomorphic vector field $v_H$ on $D$ such that $d\pi(v_H)=\partial/\partial s$.

\begin{proposition} \label{P:horizontal_lift'}
On each slice $D_s$, the horizontal lift $v_H$ is extended continuously up to the boundary $\partial D_s$ and it is tangent to the boundary $\partial D$. More precisely, the following holds:
\begin{equation*}
v_H(\varphi)=O(\abs\varphi^r)
\end{equation*}
for some $0<r<1$.
\end{proposition}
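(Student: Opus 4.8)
The plan is to measure $v_H$ against the comparison lift $v_W$, whose boundary behaviour is already settled by Proposition~\ref{P:horizontal_lift}, and to show that the correction coming from the genuine K\"ahler--Einstein metric is negligible of fractional order. Fix a slice $D_s$. Writing the two lifts explicitly,
\begin{equation*}
v_H=\pd{}{s}-h_{s\bar\beta}h^{\bar\beta\alpha}\pd{}{z^\alpha},
\qquad
v_W=\pd{}{s}-w_{s\bar\beta}w^{\bar\beta\alpha}\pd{}{z^\alpha},
\end{equation*}
and applying them to $\varphi$ gives
\begin{equation*}
v_H(\varphi)-v_W(\varphi)
=
w_{s\bar\beta}w^{\bar\beta\alpha}\varphi_\alpha
-
h_{s\bar\beta}h^{\bar\beta\alpha}\varphi_\alpha.
\end{equation*}
Since $v_W$ extends smoothly to $\bar D$ and is tangent to $\partial D$ by Proposition~\ref{P:horizontal_lift}, the smooth function $v_W(\varphi)$ vanishes on $\partial D=\{\varphi=0\}$, so $v_W(\varphi)=O(\abs\varphi)$. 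It therefore suffices to estimate the displayed difference and show it is $O(\abs\varphi^{r})$ for some $0<r<1$.

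Next I would substitute $h_{s\bar\beta}=w_{s\bar\beta}+u_{s\bar\beta}$ together with the expansion $h^{\bar\beta\alpha}=w^{\bar\beta\alpha}+w^{\bar\beta\gamma}N_{\gamma\bar\delta}w^{\bar\delta\alpha}$ from Lemma~\ref{L:identity2}. The term $w_{s\bar\beta}w^{\bar\beta\alpha}\varphi_\alpha$ then cancels and the remainder splits into three pieces:
\begin{align*}
v_H(\varphi)-v_W(\varphi)
&=
-w_{s\bar\beta}w^{\bar\beta\gamma}N_{\gamma\bar\delta}w^{\bar\delta\alpha}\varphi_\alpha
-u_{s\bar\beta}w^{\bar\beta\alpha}\varphi_\alpha \\
&\quad
-u_{s\bar\beta}w^{\bar\beta\gamma}N_{\gamma\bar\delta}w^{\bar\delta\alpha}\varphi_\alpha.
\end{align*}
The boundary orders I would feed in are $w^{\bar\beta\alpha}=O(\abs\varphi)$ from Lemma~\ref{L:identity1}, $\norm{N^s}=O(\abs\varphi^{n-3/2-b})$ from Lemma~\ref{L:identity2}, and the smoothness up to $\bar D$ (hence boundedness) of $w_{s\bar\beta}$ and $\varphi_\alpha$. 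The first and third pieces each carry the extra factor $\abs\varphi^{n-3/2-b}$, so for $n\ge2$ they are $O(\abs\varphi^{n+1/2-b})$ and $O(\abs\varphi^{n-b-b'})$ respectively, both vanishing to order strictly greater than $1$ and hence negligible. The decisive contribution is the middle piece $u_{s\bar\beta}w^{\bar\beta\alpha}\varphi_\alpha$.

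The main obstacle is controlling the mixed derivative $u_{s\bar\beta}=\partial_{\bar\beta}u_s$ up to the boundary, since $u_s$ itself only decays to a fractional order. Here I would invoke exactly the Schauder estimate and special-coordinate argument carried out in Subsection~\ref{SS:bb}, which bounds the first-order $z$-derivatives of $u_s$ by $O(\abs\varphi^{-1/2-b'})$ for any $0<b'<1/2$; by conjugation the same holds for $u_{s\bar\beta}$. Combining this with $w^{\bar\beta\alpha}=O(\abs\varphi)$ and the boundedness of $\varphi_\alpha$ yields $u_{s\bar\beta}w^{\bar\beta\alpha}\varphi_\alpha=O(\abs\varphi^{1/2-b'})$, so that $v_H(\varphi)-v_W(\varphi)=O(\abs\varphi^{1/2-b'})$. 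Together with $v_W(\varphi)=O(\abs\varphi)$ this gives $v_H(\varphi)=O(\abs\varphi^{r})$ with $r=1/2-b'\in(0,1)$, proving tangency to $\partial D$. The same orders also show that the coefficient $h_{s\bar\beta}h^{\bar\beta\alpha}=O(\abs\varphi^{1/2-b'})$ tends to $0$ at $\partial D_s$, which is precisely the asserted continuous extension of $v_H$ to the boundary. Finally, the fractional exponent is exactly the loss incurred by the $1/\abs\varphi$ factor when passing from the special coordinates of \cite{Cheng_Yau} to the $z$-coordinates; this is the same mechanism tied to the vanishing order of the Monge--Amp\`ere solution that restricts Theorem~\ref{T:triviality} to $n\ge3$.
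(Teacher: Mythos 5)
Your overall route is the same as the paper's: compare $v_H$ with $v_W$, expand $h_{s\bar\beta}=w_{s\bar\beta}+u_{s\bar\beta}$ and $h^{\bar\beta\alpha}=w^{\bar\beta\alpha}+w^{\bar\beta\gamma}N_{\gamma\bar\delta}w^{\bar\delta\alpha}$ via Lemma~\ref{L:identity2}, split the difference into the same three pieces, and control the decisive piece $u_{s\bar\beta}w^{\bar\beta\alpha}\varphi_\alpha$ by the Schauder/special-coordinate bound $u_{s\bar\beta}=O(\abs\varphi^{-1/2-b'})$ from Subsection~\ref{SS:bb}; that part, and the resulting exponent $r=1/2-b'$, agree with the paper. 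However, one input you use repeatedly is false: $w_{s\bar\beta}$ is \emph{not} smooth up to $\bar D$, nor even bounded. Since $w=-\log(-\psi)$ with $\psi=\eta\varphi$, one computes $w_{s\bar\beta}=\frac{\psi_{s\bar\beta}}{-\psi}+\frac{\psi_s\psi_{\bar\beta}}{\psi^2}$, which blows up like $\abs\varphi^{-2}$ at the boundary. What is bounded (indeed smooth up to $\partial D$) is the \emph{contraction} $w_{s\bar\beta}w^{\bar\beta\alpha}$, by the computation of Subsection~\ref{SS:GC} combined with Lemma~\ref{L:identity1}; this is the quantity the paper actually invokes. With the correct grouping, your first piece is $O(\abs\varphi^{n-1/2-b})$ rather than $O(\abs\varphi^{n+1/2-b})$ --- still negligible, so the tangency estimate $v_H(\varphi)=O(\abs\varphi^{1/2-b'})$ survives this repair.

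The real casualty is your argument for the continuous extension. You conclude that $h_{s\bar\beta}h^{\bar\beta\alpha}=O(\abs\varphi^{1/2-b'})$ and hence tends to $0$ at $\partial D_s$; this is false, and it is in fact incompatible with the tangency you just established: if all coefficients of $v_H$ vanished on $\partial D$, then $v_H\to\partial/\partial s$ there, so $v_H(\varphi)\to\varphi_s$, which is generically nonzero on $\partial D$ (e.g.\ for the ball $\varphi=\abs{z}^2+\abs{s}^2-1$ one has $\varphi_s=\bar s$). What is $O(\abs\varphi^{1/2-b'})$ is only the difference $h_{s\bar\beta}h^{\bar\beta\alpha}-w_{s\bar\beta}w^{\bar\beta\alpha}$. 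The correct conclusion, as in the paper, is that $h_{s\bar\beta}h^{\bar\beta\alpha}$ extends continuously to $\bar D_s$ because it equals the boundary-smooth function $w_{s\bar\beta}w^{\bar\beta\alpha}$ plus correction terms (those carrying $u_{s\bar\beta}$ or $N_{\gamma\bar\delta}$) that tend to zero; the boundary value is that of $w_{s\bar\beta}w^{\bar\beta\alpha}$, generally nonzero. So the continuity assertion must be re-argued along these lines; all the needed estimates are already present in your write-up, but the statement ``the coefficients tend to $0$'' should be deleted, not repaired.
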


\begin{proof}
Note that the horizontal lift $v_H$ of $\partial/\partial s$ with respect to $H$ is given by
\begin{equation*}
v_H=
\pd{}{s}-h_{s\bar\beta}h^{\bar\beta\alpha}\pd{}{z^\alpha},
\end{equation*}
where $h_{s\bar\beta}$ and $h^{\bar\beta\alpha}$ are 
\begin{equation*}
h_{s\bar\beta}
=
w_{s\bar\beta}+u_{s\bar\beta}
\;\;\;
\textrm{and}
\;\;\;
h^{\bar\beta\alpha}
=
w^{\bar\beta\alpha}
+
w^{\bar\beta\gamma}N_{\gamma\bar\delta}w^{\bar\delta\alpha}.
\end{equation*}
By Lemma \ref{L:identity1}, we already know that $w^{\bar\beta\alpha}$ is smooth up to the boundary and $w^{\bar\beta\alpha}=O(\abs\varphi)$. It is easy to see that $w_{s\bar\beta}w^{\bar\beta\alpha}$ is smooth up to the boundary. Moreover the boundary behavior of $u_s$ implies $u_{s\bar\beta}=O(\abs\varphi^{n-5/2-b'})$ (note that we can choose $b'>0$ sufficiently small; see Subsection \ref{SS:bb}). Since $n\ge2$, all together implies the first assertion.

To show the second assertion, we compute $v_H(\varphi)$. We already know that $v_W(\varphi)=O(\abs\varphi)$. 
\begin{align*}
v_H(\varphi)
&=
	h_{s\bar\beta}h^{\bar\beta\alpha}\varphi_\alpha \\
&=
	\paren{w_{s\bar\beta}+u_{s\bar\beta}
	}
	\paren{w^{\bar\beta\alpha}
		+w^{\bar\beta\gamma}N_{\gamma\bar\delta}w^{\bar\delta\alpha}
	}
	\varphi_\alpha \\
&=
	v_W(\varphi)
	-w_{s\bar\beta}w^{\bar\beta\gamma}
		N_{\gamma\bar\delta}w^{\bar\delta\alpha}\varphi_\alpha
	-u_{s\bar\beta}w^{\bar\beta\gamma}
		N_{\gamma\bar\delta}w^{\bar\delta\alpha}\varphi_\alpha
	-u_{s\bar\beta}w^{\bar\beta\gamma}\varphi_\alpha.
\end{align*}
Obviously $v_W=O(\abs\varphi)$ by the proof of Proposition \ref{P:horizontal_lift}. Lemma \ref{L:identity2} implies that the second and third terms are also $O(\abs\varphi)$. By \eqref{E:bdry} implies that the last term satisfies that 
\begin{equation*}
\abs{u_{s\bar\beta}w^{\bar\beta\gamma}\varphi_\alpha}=
\begin{cases}
O(\abs\varphi)\;\;\;\text{if}\;\;\;n\ge3, \\
O(\abs\varphi^{1/2-b'})\;\;\;\text{if}\;\;\;n=2. \\
\end{cases}
\end{equation*}
Hence we have
\begin{equation}\label{E:tangentiality}
\abs{v_H(\varphi)}=
\begin{cases}
O(\abs\varphi)\;\;\;\text{if}\;\;\;n\ge3, \\
O(\abs\varphi^{1/2-b'})\;\;\;\text{if}\;\;\;n=2. \\
\end{cases}
\end{equation}
This yields the conclusion.
\end{proof}

\begin{proposition}\label{P:bijective}
Suppose that $\abs{v_H\varphi}<c\abs\varphi$ for some $c>0$. Then the flow of $v_H$ gives a biholomorphism from $D_0\times U$ to $D$.
\end{proposition}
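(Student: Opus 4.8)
The plan is to trivialize the family by integrating the holomorphic vector field $v_H$. Since the geodesic curvature vanishes, \eqref{E:PDE} forces $v_H$ to be holomorphic on $D$, and by construction $d\pi(v_H)=\partial/\partial s$. Hence the complex-time flow $\Phi_\zeta$ of $v_H$ projects under $\pi$ to translation by $\zeta$ in the base; in particular, if the integral curve through $(z,0)\in D_0$ survives up to complex time $s$, then it lands in the fiber $D_s$. I would therefore define $\Phi:D_0\times U\to D$ by $\Phi(z,s)=\Phi_s(z,0)$ and prove that $\Phi$ is a well-defined biholomorphism whose inverse is $p\mapsto(\Phi_{-\pi(p)}(p),\pi(p))$.

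First I would establish global existence of the flow, which is the only substantial point. Fix $z\in D_0$ and $s_0\in U$, and follow the radial path $s=ts_0$, $t\in[0,1]$, setting $\gamma(t)=\Phi_{ts_0}(z,0)$ and $g(t)=\varphi(\gamma(t))$. Because $\zeta\mapsto\Phi_\zeta(z,0)$ is holomorphic and $\varphi$ is real, the antiholomorphic derivative of $\gamma$ drops out and a direct computation gives $g'(t)=2\,\re\!\left(s_0\,v_H(\varphi)\right)$ evaluated along $\gamma$. The hypothesis $\abs{v_H\varphi}<c\abs\varphi$ then yields $\abs{g'(t)}\le 2c\abs{s_0}\,\abs{g(t)}$, and since $g<0$ in the interior this is a logarithmic (Gronwall) estimate for $-g>0$:
\begin{equation*}
(-\varphi(z,0))\,e^{-2c\abs{s_0}t}\le -g(t)\le(-\varphi(z,0))\,e^{2c\abs{s_0}t}.
\end{equation*}
In particular $g(t)$ stays strictly negative and bounded away from $0$ along the path, so $\gamma$ remains in a compact subset of $D$ (recall $D$ is bounded). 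Standard ODE theory then shows the maximal interval of existence is all of $[0,1]$, so $\Phi_{s_0}(z,0)$ is defined for every $s_0\in U$.

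Next I would verify the structural properties. Holomorphic dependence of solutions of a holomorphic ODE on the initial data and on the complex-time parameter makes $\Phi$ holomorphic. The identical Gronwall estimate, applied to $-v_H$ (whose action on $\varphi$ satisfies the same bound), shows the backward flow $\Phi_{-s}$ is defined on all of $D_s$ and carries it into $D_0$; thus $\Phi_{-s}$ and $\Phi_s$ are mutually inverse biholomorphisms $D_0\leftrightarrow D_s$. Fibrewise bijectivity upgrades to bijectivity of $\Phi:D_0\times U\to D$, and the explicit inverse $p\mapsto(\Phi_{-\pi(p)}(p),\pi(p))$ is again holomorphic by holomorphic dependence of the flow, so $\Phi$ is a biholomorphism.

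The main obstacle is precisely this global existence: a priori an integral curve of $v_H$ issued from $D_0$ could reach $\partial D$ before its base coordinate has swept out all of $U$, obstructing the trivialization. The tangency hypothesis $\abs{v_H\varphi}<c\abs\varphi$ is exactly what rules this out, converting potential boundary escape into a one-sided differential inequality for $\log(-\varphi)$ along the flow; everything else is formal once existence is secured. This also explains why $n=2$ is delicate: by \eqref{E:tangentiality} one only obtains $\abs{v_H(\varphi)}=O(\abs\varphi^{1/2-b'})$ there, which is too weak to drive the logarithmic estimate, whereas for $n\ge 3$ one has $\abs{v_H(\varphi)}=O(\abs\varphi)$ and the hypothesis holds.
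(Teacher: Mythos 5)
Your proof is correct and takes essentially the same approach as the paper: both reduce the statement to global existence of the flow of $v_H$, secured by the same logarithmic (Gronwall) estimate $\abs{\tfrac{d}{dt}(\varphi\circ\text{flow})}\le \mathrm{const}\cdot\abs{\varphi\circ\text{flow}}$, which keeps trajectories away from $\partial D$, after which integrating the holomorphic field yields the biholomorphism. Your write-up is in fact somewhat more careful than the paper's (complex-time flow along radial paths, the factor $2\abs{s_0}$, holomorphic dependence, and the explicit inverse), but the underlying argument is identical.
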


\begin{proof}
Let $p$ be a point in $D_0$. Define $\alpha:(a_p,b_p)\rightarrow D$ by a flow of $v_H$ passing through $p$, i.e.,
\begin{equation*}
\frac{d}{dt}\alpha(t)
=
v_H\vert_{\alpha(t)}
\quad\text{and}\quad
\alpha(0)=p.
\end{equation*}
We assume that $(a_p,b_p)$ is maximal. Now we claim that $a_p=-1$ and $b_p=1$. Define a function $f:(a_p,b_p)\rightarrow\RR$ by $f(t)=(\varphi\circ\alpha)(t)$. Then the hypothesis implies that $\abs{f'(t)}<c\abs{f(t)}$. It follows that
\begin{equation*}
-c<\frac{f'(t)}{f(t)}<c.
\end{equation*}
Integrating this, we have
\begin{equation*}
\int_0^\tau -c\;d\tau<\int_0^\tau\frac{f'(t)}{f(t)}d\tau<\int_0^\tau c\;d\tau,
\end{equation*}
i.e.,
\begin{equation*}
e^{-c\tau}<\abs{\frac{f(\tau)}{f(0)}}<e^{c\tau}.
\end{equation*}
Since $f(0)=(\varphi\circ\alpha)(0)=\varphi(p)<0$, this implies that
\begin{equation*}
\varphi(p)e^{-c\tau}
<
(\varphi\circ\alpha)(\tau)
<
\varphi(p)e^{c\tau},
\end{equation*}
for $\tau\in(-1,1)$.
Since $D$ is a fibration over $U$, it is obvious that $a_p=-1$ and $b_p=1$.
Hence by integrating the holomorphic vector field $v_H$, we obtain the biholomorphism from $D_0\times U$ to $D$.
\end{proof}

Hence Proposition \ref{P:bijective} and \eqref{E:tangentiality} imply the following theorem.

\begin{theorem}
Suppose that the slice dimension $n$ is greater than or equal to $3$. If the geodesic curvature $c(H)$ vanishes on $D$, then $D$ is biholomorphic to $D_0\times U$.
\end{theorem}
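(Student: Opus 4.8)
The plan is to read this theorem off as an immediate consequence of the two preceding propositions, once the hypothesis $c(H)=0$ is translated into the analytic input they require. First I would record what vanishing of the geodesic curvature buys: substituting $c(H)\equiv 0$ into the Schumacher equation \eqref{E:PDE} forces $\abs{\bar\partial v_H}^2\equiv 0$, so the horizontal lift $v_H$ is a global holomorphic vector field on $D$ satisfying $d\pi(v_H)=\partial/\partial s$. This is exactly the object whose flow Proposition \ref{P:bijective} integrates to a biholomorphism $D_0\times U\to D$, provided one can verify its single hypothesis $\abs{v_H\varphi}<c\abs\varphi$.

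Second, I would supply that hypothesis using the tangentiality estimate already proved. Since we assume $n\ge 3$, the first line of \eqref{E:tangentiality} in Proposition \ref{P:horizontal_lift'} gives $\abs{v_H(\varphi)}=O(\abs\varphi)$ as $x\to\partial D_s$. It remains to upgrade this boundary estimate to a uniform bound on all of $D$. On the interior of each slice $\varphi<0$ and both $v_H(\varphi)$ and $\varphi$ are smooth, so the ratio $v_H(\varphi)/\varphi$ is continuous there; the estimate $\abs{v_H(\varphi)}=O(\abs\varphi)$ says precisely that this ratio stays bounded as $x\to\partial D_s$, hence extends to a bounded function on the compact closure $\bar D_s$. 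Because the constants in \eqref{E:tangentiality} are uniform in $s$ (after shrinking $U$ if necessary), this yields a single constant $c>0$ with $\abs{v_H\varphi}<c\abs\varphi$ throughout $D$.

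Third, with that bound in hand Proposition \ref{P:bijective} applies verbatim: integrating $v_H$ through each $p\in D_0$ produces trajectories that persist for the full parameter interval, since the differential inequality $\abs{f'}<c\abs{f}$ for $f=\varphi\circ\alpha$ confines $(\varphi\circ\alpha)(\tau)$ between $\varphi(p)e^{-c\tau}$ and $\varphi(p)e^{c\tau}$, both of which remain strictly negative, so no trajectory can reach $\partial D_s$ in finite time. The resulting flow is the sought biholomorphism $D_0\times U\to D$.

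I do not expect a genuine obstacle, since the substantive work lives in the two propositions; the one point deserving care is the role of the hypothesis $n\ge 3$. For $n=2$ the estimate \eqref{E:tangentiality} degrades to $\abs{v_H(\varphi)}=O(\abs\varphi^{1/2-b'})$ with exponent strictly below $1$, so $\abs{v_H\varphi}/\abs\varphi$ blows up at the boundary and the hypothesis of Proposition \ref{P:bijective} fails. Thus the restriction to $n\ge 3$ is not an artifact of the packaging but exactly the threshold at which the vanishing order of the Monge--Amp\`ere solution near $\partial D_s$ becomes strong enough to keep the horizontal lift tangent to the fibers to first order.
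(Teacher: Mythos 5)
Your overall skeleton matches the paper's: $c(H)\equiv 0$ together with Schumacher's equation \eqref{E:PDE} makes $v_H$ a holomorphic lift of $\partial/\partial s$, the case $n\ge 3$ of \eqref{E:tangentiality} supplies the tangency estimate, and Proposition \ref{P:bijective} integrates the flow. But there is a genuine gap at the one point you pass over in a parenthesis: the claim that ``the constants in \eqref{E:tangentiality} are uniform in $s$ (after shrinking $U$ if necessary).'' Nothing in your argument justifies this, and it is exactly the content of the paper's proof of this theorem. The estimate \eqref{E:tangentiality} is proved slice by slice: the constant $C_s$ it produces comes from the boundary estimates on $u_s$, i.e., from Proposition \ref{P:second} and the Schauder estimate \eqref{E:Schauder}, which are elliptic estimates carried out on the fixed slice $D_s$ with respect to the metric $h_{\alpha\bar\beta}(\cdot,s)$; a priori $C_s$ may blow up as $s$ varies. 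Your compactness argument (the ratio $v_H(\varphi)/\varphi$ is continuous in the interior and bounded near the boundary, hence bounded on $\bar D_s$) only works slicewise, so it again yields an $s$-dependent bound. To get a locally uniform bound by compactness of $\bar D\cap\pi^{-1}(K)$ you would need the ratio to be bounded near $\partial D$ jointly in $(z,s)$ --- which is precisely the uniformity in question, not a consequence of shrinking $U$. (Shrinking $U$ would moreover weaken the conclusion from $D\cong D_0\times U$ to local triviality, unless local boundedness of $s\mapsto C_s$ is first established.)

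The paper closes this gap by tracing the constant through the estimates: by Schauder theory the constant $C$ in \eqref{E:Schauder} depends only on $n$, $\varepsilon$, and the ellipticity constant $\Lambda$ of $h^{\alpha\bar\beta}(\cdot,s)$ in the Cheng--Yau coordinate charts $(V,(v^1,\dots,v^n))$; and $\Lambda$ admits a lower bound independent of $s$ because in these charts $h_{\alpha\bar\beta}$ is uniformly equivalent to the Euclidean metric, and because the charts are constructed algebraically (by linear fractional transformations), hence vary smoothly with the domain, so the constants $R$, $c$, $\mathscr{A}_l$ of Definition 1.1 in \cite{Cheng_Yau} can be chosen independent of $s$. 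Some argument of this kind --- controlling the parameter dependence of the slicewise elliptic estimates --- is indispensable; without it the theorem does not follow from the two propositions. Your diagnosis of the role of $n\ge 3$ is, on the other hand, correct: for $n=2$ the exponent in \eqref{E:tangentiality} drops below $1$, and the hypothesis of Proposition \ref{P:bijective} genuinely fails.
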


\begin{proof}
Note that the constant $C_s$ from \eqref{E:tangentiality} depends on $s$, i.e., we have $C_s>0$ such that 
\begin{equation*}
v_H(\varphi)=C_s(\abs\varphi).
\end{equation*}
This constant $C_s$ is coming from \eqref{E:Schauder}. Hence it is enough to show that there exists a constant $C$ in \eqref{E:Schauder} which does not depend on $s$. By the Schauder theorem, the constant $C$ depends only on the $n, \varepsilon, \Lambda$ where $\Lambda$ satisfies that
\begin{equation*}
h^{\alpha\bar\beta}(z,s)\xi^\alpha\xi^{\bar\beta}
\ge
\Lambda\abs\xi^2
\;\;\;
\text{for}
\;\;\;
z\in V,
\;\;\;
\xi\in\CC^n.
\end{equation*}
We have a uniform lower bound of $\Lambda$ which does not depend on $s$ because of the following:
\begin{itemize}
\item [1.] $(V,(v^1,\dots,v^n))$ is a special coordinate constructed by Cheng and Yau. On this coordinate, the metric tensor $h_{\alpha\bar\beta}$ with respect to $(v^1,\dots,v^n)$ is uniformly equivalent to the Euclidean metric, i.e., there exists a uniform constant $c>0$ such that
\begin{equation*}
\frac{1}{c}\delta_{\alpha\bar\beta}
<
h_{\alpha\bar\beta}
<
c\delta_{\alpha\bar\beta}.
\end{equation*}
\item [2.] The construction of $(V,(v^1,\dots,v^n))$ is algebraic (just using linear fractional transforms), in particular, if the strongly pseudoconvex domain varies smoothly, then the coordinates also varies smoothly. Hence we can choose the uniform constants $R, c, \mathscr{A}_l$ in Definition 1.1 in \cite{Cheng_Yau}, which do not depend on $s$.
\end{itemize}
Therefore we have the conclusion by Proposition \ref{P:bijective}.
\end{proof}

\section{A remark on $2$-dimensional slice case}\label{S:remark}
In this section we discuss about the difference between 2-dimensional case and higher dimensional case.

Together with the computation of \cite{Choi}, we have already seen the following:
\begin{itemize}
\item [(\romannumeral1)]$\abs{c(H)-c(W)}$ is bounded if the slice dimension $n\ge3$.
\item [(\romannumeral2)]$\displaystyle\frac{c(H)}{c(W)}\rightarrow1$  as $x$ goes to a strongly pseudoconvex boundary point.
\end{itemize}

If the slice dimension is equal to or greater than $3$, then (\romannumeral1) implies that $c(H)$ is bounded from below in a fixed slice $D_s$. By applying the almost maximum principle to \eqref{E:PDE}, it follows that $c(H)$ is nonnegative, i.e., the function $h$ is plurisubharmonic. Hence we have that if the boundary of $D_s$ has a strongly pseudoconvex boundary point in $D$, then $c(H)$ is strict positive, namely, $h$ is strictly plurisubharmonic.

On the other hand, if the slice dimension is equal to $2$, then we do not know whether $c(H)$ is bounded from below. We only know that $c(H)$ goes to the infinity if the point goes to the strongly pseudoconvex boundary point of $D$. Hence we cannot draw the conclusion that $c(H)$ is nonnegative provided that $D_s$ has a strongly pseudoconvex boundary point of $D$. However, if every boundary point of $D_s$ is a strongly pseudoconvex boundary point of $D$, then $c(H)$ is bounded from below. Again the almost maximum principle implies that $c(H)$ is nonnegative. Then we have that $c(H)$ is strictly positive by Proposition \ref{P:uniqueness}, i.e., $h$ is strictly plurisubharmonic. Therefore, it is quite natural to ask the following question:

\begin{question}
Let $D$ be a pseudoconvex domain in $\CC^{n+1}$ with smooth boundary. Suppose that there exists a boundary point $p$ of $D_s$ such that $p$ is a strongly pseudoconvex boundary point of $D$. Is $h$ is strictly plurisubharmonic near $D_s$?
\end{question}

\noindent\textbf{Acknowledgements.} 
First of all, the author would like to thank Professor Jun-Muk Hwang for suggesting this problem, sharing his ideas, and for his constant support. He would also like to thank Bo Berndtsson, Mihai P\v{a}un and Georg Schumacher for many helpful advices and discussions and thank Xu Wang for helpful comments about the local triviality. The author was supported by TJ Park Science Fellowship funded by POSCO TJ Park Foundation.

\end{document}